\newtheorem{theorem}{Theorem}
\newtheorem{corollary}[theorem]{Corollary}
\newtheorem{proposition}[theorem]{Proposition}
\newtheorem{conjecture}[theorem]{Conjecture}
\newcommand{\pk}{{\rm pk\,}}
\newcommand{\lpk}{{\rm pk^{\emph{l}}\,}}
\newcommand{\cpk}{{\rm cpk\,}}
\newcommand{\val}{{\rm val\,}}
\newcommand{\cval}{{\rm cval\,}}
\newcommand{\ms}{\mathfrak{S}_{n+1}}
\newcommand{\msn}{\mathfrak{S}_n}
\newcommand{\msb}{\mathfrak{S}_8}
\newcommand{\mcc}{\mathcal{C}_{n+1}}
\newcommand{\mcn}{{\mathcal C}_n}
\newcommand{\md}{\mathcal{D}_n}
\newcommand{\mdk}{\mathcal{D}_{n-k}}
\newcommand{\fix}{{\rm fix\,}}
\newcommand{\cyc}{{\rm cyc\,}}
\newcommand{\lrf}[1]{\lfloor #1\rfloor}
\title[Permutations by cyclic peaks and valleys]{Enumeration of permutations by number of cyclic occurrence of peaks and valleys}
\author[S.-M.~Ma]{Shi-Mei~Ma}
\address{School of Mathematics and Statistics,
         Northeastern University at Qinhuangdao,
         Hebei 066004, P. R. China}
\email{shimeima@yahoo.com.cn (S.-M. Ma)}
\author[C.-O.~Chow]{Chak-On~Chow}
\address{P.O. Box 91100, Tsimshatsui Post Office, Hong Kong}
\address{Department of Mathematics and Information Technology, Hong Kong Institute of Education, 10 Lo Ping Road, Tai Po, New Territories, Hong Kong}
\email{cchow@alum.mit.edu (C.-O. Chow) }
\thanks{The first author is supported by NSFC (11126217).}
\begin{document}

\maketitle

\begin{abstract}
In this paper, we focus on the enumeration of permutations by number of cyclic occurrence of peaks and valleys.
We find several recurrence relations involving the number of
permutations with a prescribed number of cyclic peaks, cyclic valleys, fixed points and cycles.
Several associated permutation statistics and the corresponding generating functions are also studied. In particular, we establish a connection between cyclic valleys and Pell numbers as well as cyclic peaks and alternating runs.
\bigskip\\
{\sl Keywords:} Cyclic peaks; Cyclic valleys; Fixed points; Cycles; Alternating runs
\smallskip\\
{\sl 2010 Mathematics Subject Classification:} 05A05; 05A15
\end{abstract}
\section{Introduction}
Let $[n]=\{1,2,\ldots,n\}$, and let $\msn$ denote the
the set of permutations of $[n]$.
A permutation $\pi\in\msn$ can be written in {\it one-line notation} as the word $\pi=\pi(1)\pi(2)\cdots\pi(n)$.
Another way of writing the permutation is given by the {\it standard
cycle decomposition}, where each cycle is written with its smallest entry first and the cycles are written in increasing order of their smallest entry. For example, the permutation $\pi=64713258\in\msb$ has the standard
cycle decomposition $(1,6,2,4)(3,7,5)(8)$.

A permutation $\pi=\pi(1)\pi(2)\cdots\pi(n)\in\msn$
is {\it alternating} if $\pi(1)>\pi(2)<\pi(3)>\pi(4)<\cdots$. Similarly, $\pi$ is {\it reverse alternating} if $\pi(1)<\pi(2)>\pi(3)<\pi(4)>\cdots$.
It is well known~\cite{Andre84} that the {\it Euler numbers} $E_n$ defined by
$$\sum_{n=0}^\infty E_n\frac{x^n}{n!}=\tan x+\sec x$$
count alternating permutations in $\msn$.
The first few values of $E_n$ are $1,1,1,2,5,16,61,272,\ldots$.
The bijection $\pi\mapsto \pi^c$ on $\msn$ defined by $\pi^c(i)=n+1-\pi(i)$ shows that $E_n$ is also the number of reverse alternating permutations in $\msn$. The study of the Euler numbers is a topic in combinatorics (see~\cite{Sta10}). For example,
Elizalde and Deutsch~\cite{Elizalde11} recently studied {\it cycle up-down}
permutations. A cycle is said to be up-down if, when written in standard cycle form, say $(b_1,b_2,b_3,\ldots)$, we have $b_1<b_2>b_3<\cdots$. We say that $\pi$ is a cycle up-down permutation if it is a product of up-down cycles.
Elizalde and Deutsch~\cite[Proposition 2.1]{Elizalde11} found that the number of cycle up-down permutations of $[n]$ is $E_{n+1}$ .

There is a wealth of literature on peak statistics
of permutations (see~\cite{Bouchard10,Dilks09,Kitaev03,Ma12,Rieper00,Zeng12,Vella03} for instance).
For example, Kitaev~\cite{Kitaev03} found that there are $2^{n-1}$ permutations of $[n]$ without interior peaks.
Let $\pi=\pi(1)\pi(2)\cdots \pi(n)\in\msn$. An {\it interior peak} (resp. {\it interior valley}) in $\pi$ is an index $i\in\{2,3,\ldots,n-1\}$ such that $\pi(i-1)<\pi(i)>\pi(i+1)$ (resp. $\pi(i-1)>\pi(i)<\pi(i+1)$).
Clearly, interior peaks and interior valleys are equidistributed on $\msn$.
Let $\pk(\pi)$ (resp. $\val(\pi)$) denote the number of
interior peaks (resp. the number of interior valleys) in $\pi$.
An {\it left peak} in $\pi$ is an index $i\in[n-1]$ such that $\pi(i-1)<\pi(i)>\pi(i+1)$, where we take $\pi(0)=0$.
Let $\lpk(\pi)$ denote {\it the number of
left peaks} in $\pi$. For example, the permutation $\pi=64713258\in\msb$ has $\pk(\pi)=2,\val(\pi)=3$ and $\lpk(\pi)=3$.

For $n\geqslant 1$, we define
\begin{equation*}
W_n(q)=\sum_{\pi\in\msn}q^{\pk(\pi)}\quad {\text and}\quad \overline{W}_n(q)=\sum_{\pi\in\msn}q^{\lpk(\pi)}.
\end{equation*}
It is well known that
the polynomials $W_n(q)$ satisfy the recurrence relation
\begin{equation*}
W_{n+1}(q)=(nq-q+2)W_n(q)+2q(1-q)W_n'(q),
\end{equation*}
with initial values $W_1(q)=1$, $W_2(q)=2$ and $W_3(q)=4+2q$,
and the polynomials $\overline{W}_n(q)$ satisfy the recurrence relation
\begin{equation*}
\overline{W}_{n+1}(q)=(nq+1)\overline{W}_{n}(q)+2q(1-q)\overline{W}_{n}'(q),
\end{equation*}
with initial values $\overline{W}_{1}(q)=1$, $\overline{W}_{2}(q)=1+q$ and $\overline{W}_{3}(q)=1+5q$
(see~\cite[\textsf{A008303, A008971}]{Sloane}).
The exponential generating functions of the polynomials $W_n(q)$ and $\overline{W}_n(q)$ are respectively given
as follows (see~\cite{Ma12}):
\begin{equation*}
W(q,z)=\sum_{n\geq 1}W_n(q)\frac{z^n}{n!}= \frac{\sinh(z\sqrt{1-q})}{\sqrt{1-q}\cosh(z\sqrt{1-q})-\sinh(z\sqrt{1-q})},
\end{equation*}
\begin{equation}\label{Wqz-GF}
\overline{W}(q,z)=1+ \sum_{n\geq 1}\overline{W}_n(q)\frac{z^n}{n!}
   =\frac{\sqrt{1-q}}{\sqrt{1-q}\cosh(z\sqrt{1-q})-\sinh(z\sqrt{1-q})}.
\end{equation}

An occurrence of a {\it pattern} $\tau$ in a permutation $\pi$ is defined as a subsequence in $\pi$ whose letters are in the same relative order as those in $\tau$.
For example, the permutation $\pi=64713258\in\msb$ has two occurrences of
the pattern 1--2--3--4, namely the subsequences 1358 and 1258.
In~\cite{Babson00}, Babson and Steingr\'imsson introduced {\it generalized permutation patterns} that allow the requirement that two adjacent letters in a pattern must be adjacent in the permutation. Thus, an occurrence of an interior peak in a permutation is an occurrence of the pattern $132$ or $231$. Similarly, an occurrence of interior valley is an occurrence of the pattern $213$ or $312$.
Recently, Parviainen~\cite{Parviainen06,Parviainen07} explored {\it cyclic occurrence of patterns} over $\msn$ via continued fractions.

In this paper, we focus on the enumeration of permutations by number of cyclic occurrence of peaks and valleys.
The paper is organised as follows.
In Section~\ref{notation}, we collect some notation, definitions and results that will be needed in the rest of the paper. In Section~\ref{Result}, we present several recurrence relations. In Section~\ref{Euler}, we discuss two classes of triangular arrays.
In Section~\ref{GF}, we compute two exponential generating functions of generating functions of permutations by their numbers of cyclic peaks/valleys, cycles and fixed points. In Section~\ref{pell}, we establish a connection between cyclic valleys and the famous Pell numbers. In Section~\ref{Runs}, we establish a connection between cyclic peaks and alternating runs.
\section{Notation, definitions and preliminaries}\label{notation}
In the following discussion we always write a permutation $\pi\in\msn$
in standard cycle decomposition.
Let
\begin{equation*}
\pi=(c_1^1,c_2^1,\ldots c_{i_1}^1)(c_1^2,c_2^2,\ldots c_{i_2}^2)\ldots (c_1^k,c_2^k,\ldots c_{i_k}^k),
\end{equation*}
and let $\sigma$ be a generalised pattern. Following Parviainen~\cite{Parviainen06,Parviainen07},
the pattern $\sigma$ occurs cyclically in $\pi$ if it occurs in the permutation
$$\Psi(\pi)=c_1^1,c_2^1,\ldots c_{i_1}^1,c_1^2,c_2^2,\ldots c_{i_2}^2,\ldots c_1^k,c_2^k,\ldots c_{i_k}^k,$$
with the further restriction that $c_{i_j}^j$ and $c_{1}^{j+1}$ are not adjacent, where $1\leq j\leq k-1$.
For example, 132 does not occur in $(1)(2,4,5)(3)$, but 13--2 occurs exactly twice.
An entry $c_m^j$ in the cycle $(c_1^j,c_2^j,\ldots c_{i_j}^j)$
is called a {\it cyclic peak} (resp. {\it cyclic valley}) of $\pi$ if $c_{m-1}^{j}<c_m^j>c_{m+1}^{j}$ (resp. $c_{m-1}^{j}>c_m^j<c_{m+1}^{j}$), where $2\leqslant m\leqslant i_j-1$ and $1\leqslant  j\leqslant k$.
For example, the permutation $(1,6,2,4)(3,7,5)(8)$ has the cyclic peaks 6 and 7 and
the cyclic valley 2.

Let $\cpk(\pi)$ (resp. $\cval(\pi)$) denote {\it the number of cyclic peaks} (resp. {\it the number of cyclic valleys}) of $\pi$.
The number of {\it fixed points} of $\pi$ is $\fix(\pi)=\#\{1\leqslant i\leqslant
n:\pi(i)=i\}$. A fixed-point-free permutation is called a {\it derangement}.
Let $\md$ denote the set of derangements of $[n]$.
Denote by $\cyc(\pi)$ the number of {\it cycles} of
$\pi$.
For $n\geqslant 1$, we introduce the following generating functions:
$$P_n(q,x,y)=\sum_{\pi\in\msn}q^{\cpk(\pi)}x^{\cyc(\pi)}y^{\fix(\pi)};$$
$$V_n(q,x,y)=\sum_{\pi\in\msn}q^{\cval(\pi)}x^{\cyc(\pi)}y^{\fix(\pi)};$$
$$M_n(q)=\sum_{\pi\in\msn}q^{\cpk(\pi)}=\sum_{k\geqslant 0}M_{n,k}q^k;$$
$$\overline{M}_n(q)=\sum_{\pi\in\msn}q^{\cval(\pi)}=\sum_{k\geqslant 0}\overline{M}_{n,k}q^k;$$
$$D_n(q)=\sum_{\pi\in \md}q^{\cpk(\pi)}=\sum_{k\geqslant 0}D_{n,k}q^k;$$
$$\overline{D}_n(q)=\sum_{\pi\in \md}q^{\cval(\pi)}=\sum_{k\geqslant 0}\overline{D}_{n,k}q^k.$$
Let
$$p(n,t,s,r)=\#\{\pi\in\msn:\cpk(\pi)=t,\cyc(\pi)=s,\fix(\pi)=r\},$$
and let
$$v(n,t,s,r)=\#\{\pi\in\msn:\cval(\pi)=t,\cyc(\pi)=s,\fix(\pi)=r\}.$$

The {\it Stirling numbers of the second kind} ${n \brace k}$ is the number of
partitions of $[n]$ into $k$ blocks. Let $S_n(x)=\sum_{k=1}^n{n \brace k}x^k$.
It is well known (see~\cite[A008277]{Sloane}) that $${n+1 \brace k}={n \brace k-1}+k{n \brace k},$$ which is equivalent to
\begin{equation*}
S_{n+1}(x)=xS_{n}(x)+xS_n'(x).
\end{equation*}
The {\it associated Stirling numbers of second kind} $T(n,k)$ is the number of partitions of $[n]$ into
$k$ blocks of size at least 2. Let $T_n(x)=\sum_{k\geqslant 1}T(n,k)x^k$.
It is well known (see~\cite[A008299]{Sloane}) that $$T(n+1,k)=kT(n,k)+nT(n-1,k-1),$$ which is equivalent to
\begin{equation*}
 T_{n+1}(x)=xT_{n}'(x)+nxT_{n-1}(x).
\end{equation*}
Clearly, $P_n(0,x,1)=S_{n}(x)$ and $P_n(0,x,0)=T_n(x)$. For example,
take a
permutation
$$\pi=(\pi({i_1}),\ldots)(\pi({i_2}),\ldots)\cdots(\pi({i_s}),\ldots)$$
counted by $p(n,0,s,r)$.
Recall that
$$p(n,0,s,r)=\#\{\pi\in \msn:\cpk(\pi)=0, \cyc(\pi)=s, \fix(\pi)=r\}.$$
Erasing the parentheses, we get a partition of $[n]$ with $s$
blocks. Hence $$\sum_{r\geqslant0}p(n,0,s,r)={n \brace s}.$$

We say that a permutation $\pi$ is called a {\it circular permutation} if $\cyc(\pi)=1$.
Denote by $\mcn$ the set of circular permutations of $[n]$. Each $\pi\in\mcc$ can
be written uniquely as a cycle of the form $\pi=(1,a_1,a_2,\ldots,a_n)$.
Let $\varphi(\pi)=b_1b_2\cdots b_n$, where $b_i=a_i-1$ for $1\leqslant i\leqslant n$.
The correspondence $\varphi: \mcc\mapsto \msn$ is clearly a bijection.
Using the bijection $\varphi$, it is clear that the coefficient of $x$ in the polynomial $P_{n+1}(q,x,y)$ (resp. $V_{n+1}(q,x,y)$) is $\overline{W}_{n}(q)$ (resp. $W_n(q)$).
In the next section, we present recurrence relations for
the polynomials $P_{n}(q,x,y)$ and $V_{n}(q,x,y)$.
\section{Recurrence relations}\label{Result}
\begin{theorem}\label{pnstr}
The numbers $p(n,t,s,r)$ satisfy the recurrence relation
\begin{align*}
p(n+1,t,s,r)&=(2t+s-r)p(n,t,s,r)+p(n,t,s-1,r-1)+\\
            &(r+1)p(n,t,s,r+1)+(n+2-2t-s)p(n,t-1,s,r).
\end{align*}
\end{theorem}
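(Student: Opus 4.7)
My plan is to prove the recurrence combinatorially by inserting the element $n+1$ into a permutation $\pi \in \msn$ to build each $\pi' \in \ms$. Since $n+1$ exceeds every existing entry, the smallest-entry-first convention for cycles is preserved, and each insertion falls into one of four disjoint classes matching the four terms on the right-hand side: (a) $n+1$ forms a new singleton cycle; (b) $n+1$ is appended to an existing fixed point $(c)$, producing $(c,n+1)$; or (c)/(d) $n+1$ is inserted into an existing cycle of length $\geq 2$, split by whether $\cpk$ is preserved or increases by one. Case (a) gives $\Delta\cyc = \Delta\fix = +1$ and $\Delta\cpk = 0$, contributing $p(n,t,s-1,r-1)$. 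Case (b) leaves $\cyc$ and $\cpk$ unchanged (a $2$-cycle has no interior position) and decreases $\fix$ by one, and the $r+1$ choices of fixed point give the $(r+1)p(n,t,s,r+1)$ term.

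The substantive part is cases (c)/(d). Writing the chosen cycle as $(c_1,\ldots,c_k)$ with $c_1$ minimal and $k\geq 2$, inserting $n+1$ after $c_i$ yields $(c_1,\ldots,c_i,n+1,c_{i+1},\ldots,c_k)$. A neighbor-by-neighbor check shows: if $i = k$, then $n+1$ sits at the end and no interior triple is disturbed, so $\Delta\cpk = 0$; if $1 \leq i \leq k-1$, then $n+1$ occupies an interior position with two smaller neighbors and is itself a new cyclic peak, while any cyclic peak that was located at $c_i$ or $c_{i+1}$ is destroyed because its new neighbor $n+1$ is larger. Two consecutive entries of a cycle cannot both be cyclic peaks, so at most one peak is lost, and $\Delta\cpk \in \{0,+1\}$, with $\Delta\cpk = 0$ exactly when one of $c_i,c_{i+1}$ was a peak and $\Delta\cpk = +1$ exactly when neither was.

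The key enumeration is the following double count: within a cycle of length $k\geq 2$ with $p$ cyclic peaks, each peak at interior position $m$ appears in exactly the two adjacent pairs $(c_{m-1},c_m)$ and $(c_m,c_{m+1})$, and distinct peaks give distinct pairs (no two peaks are adjacent). Hence among the $k-1$ slots $1\leq i\leq k-1$, there are $2p$ with $\Delta\cpk=0$ and $k-1-2p$ with $\Delta\cpk=+1$, and the slot $i=k$ contributes one more $\Delta\cpk=0$ case, for a per-cycle total of $2p+1$ and $k-1-2p$ respectively. Summing over the $s-r$ non-fixed-point cycles of $\pi$, with $\sum k = n-r$ and $\sum p = t$, gives $2t+s-r$ insertions with $\Delta\cpk=0$ and $n-s-2t$ with $\Delta\cpk=+1$. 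The former yields $(2t+s-r)p(n,t,s,r)$; the latter, after replacing $t$ by $t-1$ in $\pi$, yields $(n-s-2(t-1))p(n,t-1,s,r) = (n+2-2t-s)p(n,t-1,s,r)$, and adding the four contributions gives the stated recurrence.

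The hardest step will be the local peak analysis combined with the double count that pairs each existing cyclic peak with exactly two adjacent slots; once the split into $2p+1$ and $k-1-2p$ within each cycle is in place, the summation and case bookkeeping are routine.
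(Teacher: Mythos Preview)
Your proof is correct and follows the same insertion-of-$n+1$ strategy as the paper's own proof, with the same four-way case split; in fact your argument is more careful, since you justify the split of the non-fixed-point insertions into $2t+s-r$ and $n-s-2t$ via an explicit double count, whereas the paper simply asserts that placing $n+1$ ``on either side of some cyclic peak or at the end of a cycle of length at least $2$'' gives $2t+s-r$ choices and that the remaining ``middle positions'' number $n+2-2t-s$.
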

\begin{proof}
Let $n$ be a fixed positive integer.
Let
$\sigma_{i}\in \ms$ be the permutation obtained from $\sigma\in
\msn$  by inserting the entry $n+1$ either to the left or to the right of $\sigma(i)$ if
$i\in[n]$ or as a new cycle $(n+1)$ if $i=n+1$. Then
$$\cyc(\sigma_{i})=\begin{cases}
\cyc(\sigma) & \text{if $i\in[n]$},\\ \cyc(\sigma)+1  & \text{if
$i=n+1$;}
\end{cases}$$
and
$$\fix(\sigma_{i})=\begin{cases}
\fix(\sigma)-1 & \text{if $i\in[n]$ and $\sigma(i)=i$},\\
\fix(\sigma) & \text{if $i\in[n]$ and $\sigma(i)\neq i$,}\\
\fix(\sigma)+1 & \text{if $i=n+1$.}
\end{cases}$$

Recall that
\begin{equation*}
p(n+1,t,s,r)=\#\{\sigma_{i}\in \ms:\cpk(\sigma_{i})=t, \cyc(\sigma_{i})=s, \fix(\sigma_{i})=r\}.
\end{equation*}

It is evident that $\cpk(\sigma_{i})=\cpk(\sigma)$ or $\cpk(\sigma_{i})=\cpk(\sigma)+1$.
There are four cases to consider.
\begin{enumerate}
  \item [(a)]If $\sigma$ has $t$ cyclic peaks, $s$ cycles and $r$ fixed points, then we can
put the entry $n+1$ on either side of some cyclic peak or at the end of a cycle of length greater than or equal to 2. This means we have $2t+s-r$ choices for the positions of $n+1$.
As we have $p(n,s,t,r)$ choices for $\sigma$,
the first term of the recurrence relation is obtained.
  \item [(b)]If $\sigma$ has $t$ cyclic peaks, $s-1$ cycles and $r-1$ fixed points, then we can insert
the entry $n+1$ at the end of $\sigma$ to form a new cycle $(n+1)$.
As we have $p(n,t,s-1,r-1)$ choices for $\sigma$, the second term of
the recurrence relation is obtained.
  \item [(c)]If $\sigma$ has $t$ cyclic peaks, $s$ cycles and $r+1$ fixed points, then we can insert
the entry $n+1$ to the right of a fixed points. This means we have $r+1$ choices for the positions of $n+1$.
This case
applies to the third term in the recurrence relation.
  \item [(d)]If $\sigma$ has $t-1$ cyclic peaks, $s$ cycles and $r$ fixed points, then we can insert
the entry $n+1$ in one of the $n-2(t-1)-s=n+2-2t-s$ middle positions and the last term of
the recurrence relation is obtained.
\end{enumerate}
This completes the proof.
\end{proof}

We can express Theorem~\ref{pnstr} in terms of differential operators.

\begin{corollary}\label{Pnqxy-recu}
For $n\geqslant 1$, we have
\begin{align*}
P_{n+1}(q,x,y)&=(nq+xy) P_n(q,x,y)+2q(1-q)\frac{\partial P_n(q,x,y)}{\partial q}+x(1-q)\frac{\partial P_n(q,x,y)}{\partial x}\\
&\quad+(1-y)\frac{\partial P_n(q,x,y)}{\partial y}.
\end{align*}
In particular,
\begin{equation}\label{Pnqx1}
P_{n+1}(q,x,1)=(nq+x) P_n(q,x,1)+2q(1-q)\frac{\partial P_n(q,x,1)}{\partial q}+x(1-q)\frac{\partial P_n(q,x,1)}{\partial x}.
\end{equation}
\end{corollary}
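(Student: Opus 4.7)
The plan is to translate Theorem~\ref{pnstr} into the generating-function identity by multiplying both sides of the recurrence by $q^t x^s y^r$ and summing over all nonnegative integers $t,s,r$. Since
\[
P_{n+1}(q,x,y)=\sum_{t,s,r\geqslant 0}p(n+1,t,s,r)q^tx^sy^r,
\]
the only work is to identify the four resulting sums on the right-hand side with the stated differential expressions in $P_n$.

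I would handle the four terms in the order they appear in Theorem~\ref{pnstr}. For the first, observe that $\sum_{t,s,r}(2t+s-r)p(n,t,s,r)q^tx^sy^r = 2q\,\partial_q P_n + x\,\partial_x P_n - y\,\partial_y P_n$, by the standard Euler-operator identities $q\partial_q(q^t)=tq^t$ etc. For the second, a shift of summation indices $s\mapsto s+1$, $r\mapsto r+1$ turns $p(n,t,s-1,r-1)q^tx^sy^r$ into $xy\cdot P_n(q,x,y)$. For the third, the shift $r\mapsto r+1$ together with the factor $r+1$ produces $\partial_y P_n$. For the fourth, the shift $t\mapsto t+1$ rewrites the coefficient $(n+2-2t-s)$ as $(n-2t-s)$ acting on $p(n,t,s,r)$, so the sum becomes $q\bigl(nP_n-2q\,\partial_q P_n - x\,\partial_x P_n\bigr)$.

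Adding the four contributions and collecting like derivatives gives coefficient $nq+xy$ on $P_n$, coefficient $2q-2q^2=2q(1-q)$ on $\partial_q P_n$, coefficient $x-qx=x(1-q)$ on $\partial_x P_n$, and coefficient $1-y$ on $\partial_y P_n$, which is exactly the asserted identity. The specialization~\eqref{Pnqx1} is then immediate on setting $y=1$, since the $(1-y)\partial_y P_n$ term vanishes and $xy$ collapses to $x$.

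There is no real obstacle here; the only thing to be careful about is the bookkeeping in step (d), where one must remember that after reindexing $t\mapsto t+1$ the coefficient $(n+2-2(t+1)-s)=(n-2t-s)$ splits cleanly into $n$ minus the Euler operators $2q\partial_q$ and $x\partial_x$ evaluated on $q^{t+1}x^s$, which is why the extra factor $q$ appears outside.
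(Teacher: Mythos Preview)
Your proposal is correct and is precisely the approach the paper has in mind: the corollary is stated immediately after Theorem~\ref{pnstr} with the remark ``We can express Theorem~\ref{pnstr} in terms of differential operators,'' and your multiply-by-$q^tx^sy^r$-and-sum argument is exactly that translation. The bookkeeping in each of the four cases is accurate, and the specialization to $y=1$ is immediate as you note.
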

By Corollary~\ref{Pnqxy-recu}, we can easily compute the first few polynomials $P_n(q,x,y)$:
\begin{align*}
  P_1(q,x,y)& =xy, \\
  P_2(q,x,y)& =x+x^2y^2, \\
  P_3(q,x,y)& =(1+q)x+3x^2y+x^3y^3,\\
  P_4(q,x,y)& =(1+5q)x+(3+4y+4qy)x^2+6x^3y^2+x^4y^4.
\end{align*}

Recall that $M_n(q)=P_n(q,1,1)$. For $1\leqslant n\leqslant 7$, using~\eqref{Pnqx1}, the coefficients of $M_n(q)$
can be arranged as follows with $M_{n,k}$ in row $n$ and column $k$:
$$\begin{array}{cccccccc}
  1 &  &  &  & & &&\\
  2 &  &  &  & & &&\\
  5 & 1&  &  & & &&\\
  15 & 9 &  &  & &  &&\\
  52 & 63 & 5 &  &  & &&\\
  203 & 416 & 101&  &  &   &&\\
  877 & 2741& 1361 & 61 & &  & &
\end{array}$$

Let $D_{n,k}(q,x)$ be the coefficient of $y^k$ in $P_n(q,x,y)$.
Clearly, the polynomial $P_n(q,x,0)$ is the corresponding
enumerative polynomial on $D_n$. Note that
\begin{eqnarray*}
D_{n,k}(q,x)&=&\sum_{\stackrel{\pi\in\msn}{\fix(\pi)=k}}q^{\cpk(\pi)}x^{\cyc(\pi)}\\
             &=&\binom{n}{k}x^{k}\sum_{\sigma\in
            \mdk}q^{\cpk(\sigma)}x^{\cyc(\sigma)}\\
             &=&\binom{n}{k}x^{k}P_{n-k}(q,x,0).
\end{eqnarray*}

Thus $$\left(\frac{\partial P_n(q,x,y)}{\partial y}\right)_{y=0}=D_{n,1}(q,x)=nxP_{n-1}(q,x,0).$$
Therefore, we get the following result.

\begin{proposition}
For $n\geqslant 2,$ the polynomials $P_n(q,x,0)$ satisfy the following recurrence relation
\begin{equation}\label{Pnqx0}
P_{n+1}(q,x,0)=nqP_n(q,x,0)+2q(1-q)\frac{\partial P_n(q,x,0)}{\partial q}+x(1-q)\frac{\partial P_n(q,x,0)}{\partial x}+nxP_{n-1}(q,x,0),
\end{equation}
with initial values $P_0(q,x,0)=1,P_1(q,x,0)=0,P_2(q,x,0)=x$ and $P_3(q,x,0)=(1+q)x$.
\end{proposition}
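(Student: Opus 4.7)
The plan is to derive the recurrence by specialising the recurrence of Corollary~\ref{Pnqxy-recu} at $y=0$, using the formula for $D_{n,k}(q,x)$ stated just before the proposition to evaluate the surviving $y$-derivative term.

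First, I would expand $P_n(q,x,y) = \sum_{k\geqslant 0} D_{n,k}(q,x)\, y^k$, so that
\begin{equation*}
\left(\frac{\partial P_n(q,x,y)}{\partial y}\right)_{y=0} = D_{n,1}(q,x).
\end{equation*}
The combinatorial identity $D_{n,k}(q,x) = \binom{n}{k} x^k P_{n-k}(q,x,0)$ follows because a permutation in $\msn$ with $\fix(\pi)=k$ is obtained by selecting the $k$-set of fixed points in $\binom{n}{k}$ ways—each contributing a factor of $x$ to $x^{\cyc}$ and not contributing any cyclic peak, since a cyclic peak requires a cycle of length $\geqslant 3$—and then choosing a derangement of the remaining $n-k$ letters, whose joint distribution of $\cpk$ and $\cyc$ is precisely $P_{n-k}(q,x,0)$. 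Setting $k=1$ gives $D_{n,1}(q,x) = nx\, P_{n-1}(q,x,0)$.

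Next I would set $y=0$ in the identity of Corollary~\ref{Pnqxy-recu}. The factor $(nq+xy)$ becomes $nq$; the coefficient $(1-y)$ on the $y$-derivative becomes $1$; and the $q$- and $x$-derivatives commute with the evaluation $y=0$, so they reduce to the derivatives of $P_n(q,x,0)$. Substituting in the expression for $\partial_y P_n|_{y=0}$ just computed yields exactly
\begin{equation*}
P_{n+1}(q,x,0) = nq P_n(q,x,0) + 2q(1-q)\frac{\partial P_n(q,x,0)}{\partial q} + x(1-q)\frac{\partial P_n(q,x,0)}{\partial x} + nx P_{n-1}(q,x,0),
\end{equation*}
which is the desired recurrence. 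Finally, I would verify the initial values $P_0=1$, $P_1=0$, $P_2=x$, $P_3=(1+q)x$ directly by enumerating derangements of $[n]$ for $n\leqslant 3$: the unique derangement of $[2]$ is the $2$-cycle $(1,2)$ contributing $x$, while the two derangements of $[3]$ are the $3$-cycles $(1,2,3)$ (cyclic peak $3$) and $(1,3,2)$ (no cyclic peak), giving $(1+q)x$.

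The routine specialisation is essentially immediate; the only point that deserves care is the combinatorial identity for $D_{n,k}(q,x)$, and specifically the observation that inserting or removing a fixed point changes $\cyc$ by $1$ but never affects $\cpk$. Once that is in hand, the proof is a one-line substitution.
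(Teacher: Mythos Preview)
Your proposal is correct and follows exactly the paper's approach: compute $\partial_y P_n|_{y=0}=D_{n,1}(q,x)=nxP_{n-1}(q,x,0)$ from the identity $D_{n,k}(q,x)=\binom{n}{k}x^kP_{n-k}(q,x,0)$, then specialise Corollary~\ref{Pnqxy-recu} at $y=0$. One harmless slip in your initial-value check: by the definition of cyclic peak (an interior position $2\leqslant m\leqslant i_j-1$), it is $(1,3,2)$ that has the cyclic peak $3$ and $(1,2,3)$ that has none, not the other way round---the total $(1+q)x$ is of course unaffected.
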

Recall that $D_n(q)=P_n(q,1,0)$. For $1\leqslant n\leqslant 7$, using~\eqref{Pnqx0}, the coefficients of $D_n(q)$
can be arranged as follows with $D_{n,k}$ in row $n$ and column $k$:
$$\begin{array}{cccccccc}
  0 &  &  &  & & &&\\
  1 &  &  &  & & &&\\
  1 & 1&  &  & & &&\\
  4 & 5 &  &  & &  &&\\
  11 & 28 & 5 &  &  & &&\\
  41 & 153 & 71&  &  &   &&\\
  162 & 872& 759 & 61 & &  & &
\end{array}$$

In order to provide a recurrence relation for the numbers $v(n,t,s,r)$, we first define an operation. Assume that $\pi$ is a permutation in $\msn$ with $k$ cycles $C_1,C_2,\ldots,C_k$, where $C_j=(c_1^j,c_2^j,\ldots, c_{i_j}^j)$ and $1\leqslant j\leqslant k$.
Let $\phi: \msn\rightarrow \msn$ be defined as follows:
\begin{itemize}
  \item $\phi(C_1,C_2,\ldots,C_k)=(\phi(C_1),\phi(C_2),\ldots,\phi(C_k))$.
  \item  For every cycle $C_j$, we have
$$\phi(c_1^j,c_2^j,\ldots,c_{i_j}^j)=(\phi(c_1^j),\phi(c_2^j),\ldots, \phi(c_{i_j}^j)).$$
  \item Let $\{a_1,a_2,\ldots,a_{i_j}\}$ be the set of entries of the cycle $C_j=(c_1^j,c_2^j,\ldots, c_{i_j}^j)$, and assume that $a_1<a_2<\cdots <a_{i_j}$. Then $\phi(a_m)=a_{i_j+1-m}$, where $1\leqslant m\leqslant i_j$.
\end{itemize}
For example, $\phi((1,6,2,4)(3,7,5)(8))=(6,1,4,2)(7,3,5)(8)$.
Following~\cite{Johnson00}, we
call this operation a {\it switching}.
Clearly, if $\pi$ has $t$ cyclic valleys,
then $\phi(\pi)$ has $t$ cyclic peaks, and vice versa.

\begin{theorem}\label{vntsr}
The numbers $v(n,t,s,r)$ satisfy the recurrence relation
\begin{align*}
v(n+1,t,s,r)&=(2t+2s-2r)v(n,t,s,r)+v(n,t,s-1,r-1)\\
            &\quad+(r+1)v(n,t,s,r+1)+(n+2-2t-2s+r)v(n,t-1,s,r).
\end{align*}
\end{theorem}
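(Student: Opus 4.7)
The plan is to mirror the insertion framework of the proof of Theorem~\ref{pnstr}: for each $\sigma\in\msn$, the $n+1$ extensions $\sigma_i\in\ms$ are obtained by inserting the entry $n+1$ either adjacent to some $\sigma(i)$ or as the new cycle $(n+1)$. The transformations of $\cyc$ and $\fix$ are identical to the peak case, so the terms $v(n,t,s-1,r-1)$ (creating a new singleton cycle, case (b)) and $(r+1)\,v(n,t,s,r+1)$ (inserting next to a fixed point to form a 2-cycle $(a,n+1)$ with no interior entries, case (c)) follow by the same argument. The substantive work is to determine how insertions into non-singleton cycles affect the cyclic valley count.

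The key observation is that $n+1$ is never a cyclic valley of $\sigma_i$ (it is the maximum), so the valley count changes only through the status of $c_a$ and $c_{a+1}$, the two new neighbors of $n+1$. A direct case analysis on the insertion position $a\in\{1,\ldots,m\}$ inside a cycle $(c_1,\ldots,c_m)$ shows that the insertion creates exactly one new cyclic valley in precisely three pairwise disjoint situations: (i)~$a\in[2,m-1]$ with $c_{a-1}>c_a>c_{a+1}$ (a double descent at $c_a$, which becomes a valley with its new right neighbor $n+1$); (ii)~$a\in[1,m-2]$ with $c_a<c_{a+1}<c_{a+2}$ (a double ascent at $c_{a+1}$, which becomes a valley with its new left neighbor $n+1$); or (iii)~$a=m$ with $c_{m-1}>c_m$ (the entry $c_m$ moves from cycle-end to an interior position and is a valley). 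All other insertions preserve the valley count.

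Let $p$, $v$, $da$, $dd$ denote the numbers of cyclic peaks, cyclic valleys, double ascents and double descents of the cycle. The three cases above contribute $dd+da+[c_{m-1}>c_m]$ valley-creating positions. To reduce this to the target form $m-2v-2$, I would combine the elementary identity $p+v+da+dd=m-2$ with the following lemma, which I expect to be the main obstacle: \emph{for a cycle $(c_1,\ldots,c_m)$ written in standard form, $p-v=[c_{m-1}>c_m]$}. This lemma follows from tracking the sign sequence $u_i=\operatorname{sgn}(c_i-c_{i-1})$ for $i=2,\ldots,m$: since $c_1$ is the cycle minimum we have $u_2=+$, and peaks (resp.\ valleys) correspond to $+-$ (resp.\ $-+$) transitions in $u_2u_3\cdots u_m$, so the two counts coincide when $u_m=+$ and peaks exceed valleys by one when $u_m=-$.

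Summing over all non-singleton cycles of $\sigma$, whose lengths total $n-r$ and whose valley and cycle counts total $t$ and $s-r$, yields $2t+2(s-r)$ preserving positions (case (a)) and $(n-r)-2t-2(s-r)=n-2t-2s+r$ valley-creating positions (case (d)). Re-indexing $t\mapsto t-1$ in case (d) produces the coefficient $n+2-2t-2s+r$ of $v(n,t-1,s,r)$, and combining the four cases gives the claimed recurrence.
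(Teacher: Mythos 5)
Your proof is correct, and it reaches the recurrence by a genuinely different route through the one nontrivial step. The paper runs the same insertion induction (its cases (b) and (c) are exactly your new-cycle and fixed-point cases), but for the remaining cases it invokes the switching involution $\phi$ defined just before the theorem: since $\phi$ exchanges cyclic valleys and cyclic peaks, the paper counts the valley-preserving slots by passing to $\phi(\sigma)$ and taking the two slots flanking each of its $t$ cyclic peaks together with the beginning and end of each of the $s-r$ cycles of length at least $2$, giving $2t+2s-2r$, and then gets the case (d) coefficient as the complementary count $n-s-(s-r)-2(t-1)=n+2-2t-2s+r$. You instead stay in the standard minimum-first cycle form and classify the valley-creating gaps directly as double descents, double ascents, and a final-descent end gap; I checked your case analysis, and it is sound: the three situations are indeed pairwise disjoint (a gain at $c_a$ needs $c_a>c_{a+1}$ while a gain at $c_{a+1}$ needs $c_a<c_{a+1}$), and no valley is ever destroyed, since an old valley at $c_a$ or $c_{a+1}$ satisfies the one-sided inequality that defines its new status. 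Your lemma $p-v=[c_{m-1}>c_m]$ is also correct (the sign sequence starts with $+$ because $c_1$ is the cycle minimum, and peaks/valleys are the $+-$/$-+$ transitions), and combined with $p+v+da+dd=m-2$ it collapses the gain count to $m-2-2v$ per cycle, so the global bookkeeping $(n-r)-2t-2(s-r)$ matches the paper's coefficient. As for what each approach buys: the paper's $\phi$-argument is brief and runs parallel to the proof of Theorem~\ref{pnstr}, but it is delicate, since $\phi$ does not literally commute with inserting $n+1$ (after insertion the order reversal acts on an enlarged support), so the transfer of slot counts from $\phi(\sigma)$ back to $\sigma$ is asserted rather than verified; your local analysis is longer but self-contained, makes the per-cycle count $2+2v$ of preserving slots transparent, and handles the $m=2$ edge case ($p=v=0$ and $c_1<c_2$) explicitly.
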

\begin{proof}
Let $n$ be a fixed positive integer.
Let
$\sigma_{i}\in \ms$ be the permutation obtained from $\sigma\in
\msn$  by inserting the entry $n+1$ either to the left or to the right of $\sigma(i)$ if
$i\in[n]$ or as a new cycle $(n+1)$ if $i=n+1$.
Recall that
\begin{equation*}
v(n+1,t,s,r)=\#\{\sigma_{i}\in \ms:\cval(\sigma_{i})=t, \cyc(\sigma_{i})=s, \fix(\sigma_{i})=r\}.
\end{equation*}

It is evident that $\cval(\sigma_{i})=\cval(\sigma)$ or $\cval(\sigma_{i})={\cval(\sigma)}+1$.
There are four cases to consider.
\begin{enumerate}
\item [(a)]If $\sigma$ has $t$ cyclic valleys, $s$ cycles and $r$ fixed points, then $\phi(\sigma)$ is a permutation with $t$ cyclic peaks, $s$ cycles and $r$ fixed points. Consider the permutation $\phi(\sigma)$, we can appending $n+1$ either at the beginning or at the end of a cycle of length greater than or equal to 2. We can also put the entry $n+1$ on either side of some cyclic peak of $\phi(\sigma)$. This means we have $2t+2s-2r$ choices for the positions of $n+1$. As we have $v(n,s,t,r)$ choices for $\sigma$, the first term of the the recurrence relation is explained.
\item [(b)]If $\sigma$ has $t$ cyclic valleys, $s-1$ cycles and $r-1$ fixed points, then we can insert the entry $n+1$ at the end of $\sigma$ to form a new cycle $(n+1)$. As we have $v(n,t,s-1,r-1)$ choices for $\sigma$, the second term of the recurrence relation is obtained.
\item [(c)]If $\sigma$ has $t$ cyclic valleys, $s$ cycles and $r+1$ fixed points, then we can insert the entry $n+1$ to the right of a fixed points. This means we have $r+1$ choices for the positions of $n+1$. This case applies to the third term of the recurrence relation.
\item [(d)]If $\sigma$ has $t-1$ cyclic valleys, $s$ cycles and $r$ fixed points, then we can insert the entry $n+1$ in one of the $n-s-(s-r)-2(t-1)=n+2-2t-2s+r$ middle positions. As we have $v(n,t-1,s,r)$ choices for $\sigma$, the last term of the recurrence relation is obtained.
\end{enumerate}
This completes the proof.
\end{proof}

We can express Theorem~\ref{vntsr} in terms of differential operators.

\begin{corollary}\label{Vnqxy-recu}
For $n\geqslant 1$, we have
\begin{align*}
V_{n+1}(q,x,y)&=(nq+xy)V_n+2q(1-q)\frac{\partial V_n(q,x,y)}{\partial q}+2x(1-q)\frac{\partial V_n(q,x,y)}{\partial x}\\
&\quad+(1-2y+qy)\frac{\partial V_n(q,x,y)}{\partial y}.
\end{align*}
\end{corollary}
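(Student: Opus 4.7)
The plan is exactly parallel to the derivation of Corollary~\ref{Pnqxy-recu} from Theorem~\ref{pnstr}: I multiply both sides of the recurrence in Theorem~\ref{vntsr} by $q^t x^s y^r$ and sum over $t,s,r \geqslant 0$, then translate each term into a differential operator acting on $V_n(q,x,y)$ using the standard dictionary
\[
\sum t\, a_{t,s,r}\, q^t x^s y^r = q\frac{\partial}{\partial q}\sum a_{t,s,r}\, q^t x^s y^r,
\]
and analogously for $s$ and $r$, combined with index shifts that absorb a power of the corresponding variable.

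Concretely, the first term $(2t+2s-2r)v(n,t,s,r)$ sums to $2q\,\partial_q V_n + 2x\,\partial_x V_n - 2y\,\partial_y V_n$. The shifted term $v(n,t,s-1,r-1)$ contributes $xy\, V_n$ after re-indexing $s\mapsto s+1$, $r\mapsto r+1$. The term $(r+1)v(n,t,s,r+1)$ contributes $\partial_y V_n$ after the shift $r\mapsto r-1$. Finally, for $(n+2-2t-2s+r)v(n,t-1,s,r)$, the shift $t\mapsto t+1$ factors out a $q$, and the bracket $(n-2t-2s+r)$ becomes $nV_n - 2q\,\partial_q V_n - 2x\,\partial_x V_n + y\,\partial_y V_n$ inside the sum, so that the whole term contributes $q\bigl(nV_n - 2q\,\partial_q V_n - 2x\,\partial_x V_n + y\,\partial_y V_n\bigr)$.

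Adding these four contributions and collecting like differential operators gives the coefficient of $V_n$ equal to $xy + nq$, the coefficient of $\partial_q V_n$ equal to $2q(1-q)$, the coefficient of $\partial_x V_n$ equal to $2x(1-q)$, and the coefficient of $\partial_y V_n$ equal to $-2y + 1 + qy = 1 - 2y + qy$, matching the statement of the corollary.

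There is no real obstacle here beyond careful bookkeeping of signs and of the index shifts in each of the four terms of Theorem~\ref{vntsr}; the only mildly delicate point is the fourth term, where one must remember that the factor $(n+2-2t-2s+r)$ is evaluated at the \emph{new} index $t$, so after substituting $t\mapsto t+1$ under the sum the coefficient becomes $(n-2t-2s+r)$ multiplied by an extra $q$, which is precisely what produces the $-2q^2\,\partial_q V_n$ and $-2qx\,\partial_x V_n$ needed to complete the factors $2q(1-q)$ and $2x(1-q)$.
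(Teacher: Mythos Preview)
Your proposal is correct and is exactly the approach the paper intends: it states the corollary as the expression of Theorem~\ref{vntsr} ``in terms of differential operators,'' and your term-by-term translation via multiplying by $q^t x^s y^r$ and summing carries this out with correct bookkeeping of the index shifts.
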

By Corollary~\ref{Vnqxy-recu}, we can easily compute the first few polynomials $V_n(q,x,y)$:
\begin{align*}
  V_1(q,x,y)& =xy, \\
  V_2(q,x,y)& =x+x^2y^2, \\
  V_3(q,x,y)& =2x+3x^2y+x^3y^3,\\
  V_4(q,x,y)& =(4+2q)x+(3+8y)x^2+6x^3y^2+x^4y^4,\\
  V_5(q,x,y)& =(8+16q)x+(20+20y+10qy)x^2+(15y+20y^2)x^3+10x^4y^3+x^5y^5.
\end{align*}

Recall that $\overline{M}_n(q)=V_n(q,1,1)$. For $1\leqslant n\leqslant 7$, the coefficients of $\overline{M}_n(q)$
can be arranged as follows with $\overline{M}_{n,k}$ in row $n$ and column $k$:
$$
\begin{array}{cccccccc}
  1 &  &  &  & & &&\\
  2 &  &  &  & & &&\\
  6 & &  &  & & &&\\
  22 & 2 &  &  & &  &&\\
  94 & 26 &  &  &  & &&\\
  460 & 244 & 16&  &  &   &&\\
   2532&2124 &384 &  & &  & &
\end{array}
$$

Let $\overline{D}_{n,k}(q,x)$ be the coefficient of $y^k$ in $V_n(q,x,y)$. Clearly,
$\overline{D}_{n,k}(q,x)=\binom{n}{k}x^{k}V_{n-k}(q,x,0)$.
Then $$\left(\frac{\partial V_n(q,x,y)}{\partial y}\right)_{y=0}=\overline{D}_{n,1}(q,x)=nxV_{n-1}(q,x,0).$$
Hence we get the following result.

\begin{proposition}
For $n\geqslant 2$, the polynomials $V_n(q,x,0)$ satisfy the following recurrence relation
\begin{equation}\label{Vnqx0}
V_{n+1}(q,x,0)=nqV_n(q,x,0)+2q(1-q)\frac{\partial V_n(q,x,0)}{\partial q}+2x(1-q)\frac{\partial V_n(q,x,0)}{\partial x}+nxV_{n-1}(q,x,0),
\end{equation}
with initial values $V_0(q,x,0)=1,V_1(q,x,0)=0,V_2(q,x,0)=x$ and $V_3(q,x,0)=2x$.
\end{proposition}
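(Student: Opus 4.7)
The plan is to derive the recurrence by specialising the bivariate-by-$y$ recurrence from Corollary~\ref{Vnqxy-recu} at $y=0$, exactly in parallel with the derivation of \eqref{Pnqx0} from \eqref{Pnqx1}. All of the preparatory combinatorics needed is already in place in the text immediately preceding the statement: the decomposition $\overline{D}_{n,k}(q,x)=\binom{n}{k}x^{k}V_{n-k}(q,x,0)$ (coming from choosing $k$ fixed points out of $n$, each contributing a cycle of length $1$, and letting the remaining $n-k$ letters form an arbitrary derangement) and consequently the identity
\[
\left(\frac{\partial V_n(q,x,y)}{\partial y}\right)_{y=0} = \overline{D}_{n,1}(q,x) = n\,x\,V_{n-1}(q,x,0).
\]

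First I would take the identity of Corollary~\ref{Vnqxy-recu}, which is an identity of polynomials in $q,x,y$, and evaluate both sides at $y=0$. Setting $y=0$ in $V_{n+1}(q,x,y)$ gives the left-hand side of the claimed recurrence. On the right-hand side, the factor $nq+xy$ collapses to $nq$, the two pure $x$- and $q$-derivatives of $V_n(q,x,y)$ become the corresponding derivatives of $V_n(q,x,0)$ (since evaluation in $y$ commutes with $\partial/\partial q$ and $\partial/\partial x$), and the $y$-derivative contribution is weighted by $(1-2y+qy)\big|_{y=0}=1$.

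Substituting the identity $(\partial_y V_n)_{y=0} = nxV_{n-1}(q,x,0)$ into this last term produces the desired summand $nxV_{n-1}(q,x,0)$, giving exactly \eqref{Vnqx0}. Finally I would check the initial data by direct computation from the definition: $V_0(q,x,0)=1$ (the empty permutation, a vacuous derangement with zero cycles), $V_1(q,x,0)=0$ (there is no derangement of $\{1\}$), $V_2(q,x,0)=x$ (the single derangement $(1,2)$ has one cycle and no cyclic valley), and $V_3(q,x,0)=2x$ (the two derangements $(1,2,3)$ and $(1,3,2)$ of $[3]$, each with a single cycle and no cyclic valley).

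There is no substantive obstacle; the only point requiring a moment of care is the book-keeping that $(1-2y+qy)$ evaluates to $1$ at $y=0$, so that the entire $y$-derivative term enters with coefficient $1$ and not with a $q$-dependent factor, and that the identity $\overline{D}_{n,1}(q,x) = nxV_{n-1}(q,x,0)$ used here is valid for all $n\geqslant 1$, which is precisely the range needed for the recurrence to be applied inductively starting from $n=2$.
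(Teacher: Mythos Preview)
Your proposal is correct and follows exactly the approach the paper takes: it specialises Corollary~\ref{Vnqxy-recu} at $y=0$ and uses the identity $\bigl(\partial_y V_n\bigr)_{y=0}=\overline{D}_{n,1}(q,x)=nxV_{n-1}(q,x,0)$ established just before the proposition to handle the $y$-derivative term. The only thing the paper leaves implicit that you spell out is the check that $(1-2y+qy)\big|_{y=0}=1$ and the verification of the initial values.
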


Recall that $\overline{D}_n(q)=V_n(q,1,0)$. For $1\leqslant n\leqslant 7$, using~\eqref{Vnqx0}, the coefficients of $\overline{D}_n(q)$
can be arranged as follows with $\overline{D}_{n,k}$ in row $n$ and column $k$:
$$\begin{array}{cccccccc}
  0 &  &  &  & & &&\\
  1 &  &  &  & & &&\\
  2 & &  &  & & &&\\
  7 & 2 &  &  & &  &&\\
  28 & 16 &  &  &  & &&\\
  131 & 118 & 16&  &  &   &&\\
  690& 892 &272&  & &  & &
\end{array}$$

\section{On combinations of polynomials and Euler numbers}\label{Euler}
Recall that $M_n(q)=P_n(q,1,1)$, $\overline{M}_n(q)=V_n(q,1,1)$, $D_n(q)=P_n(q,1,0)$ and $\overline{D}_n(q)=V_n(q,1,0)$.
It is easy to verify that $\deg M_n(q)=\deg D_n(q)=\lrf{\frac{n-1}{2}}$ and $\deg \overline{M}_n(q)=\deg \overline{D}_n(q)=\lrf{\frac{n}{2}}-1$ for $n\geqslant 2$.

We define
$$
R_{n}(q)=M_n(q^2)+q\overline{M}_n(q^2) \quad\textrm{for $n\geqslant 3$}.
$$
Let $R_{n}(q)=\sum_{k\geqslant 0}R_{n,k}q^k$. Then for $n\geqslant 3$, we have
\begin{equation*}\label{Trirelation}
R_{n,k}=\begin{cases}
\overline{M}_{n,\frac{k-1}{2}}& \text{if $k$ is odd},\\
M_{n,\frac{k}{2}} & \text{if $k$ is even}.
\end{cases}
\end{equation*}

The $n$th {\it Bell number} $B_n$ counts the number of partitions of $[n]$ into non-empty blocks (see~\cite[A000110]{Sloane} for details).
Recall that $M_{n,0}=\#\{\pi\in\msn:\cpk(\pi)=0\}$. Take a
permutation
$$\pi=(\pi({i_1}),\ldots)(\pi({i_2}),\ldots)\cdots(\pi({i_j}),\ldots)$$
counted by $M_{n,0}$. Erasing the parentheses, we get a partition of $[n]$ with $j$
blocks. Hence
\begin{equation}\label{eqn:02}
R_{n,0}=M_{n,0}=B_n.
\end{equation}
Set $R_1(x)=1$ and $R_2(x)=2+x$. For $1\leqslant n\leqslant 6$, the coefficients of $R_n(q)$
can be arranged as follows with $R_{n,k}$ in row $n$ and column $k$:
$$\begin{array}{cccccccc}
  1 &  &  &  & & &&\\
  2 & 1 &  &  & & &&\\
  5 & 6 & 1 &  & & &&\\
  15 & 22 & 9 & 2 & &  &&\\
  52 & 94 & 63 & 26 & 5 & &&\\
  203 & 460 & 416& 244 & 101 & 16  &&\\
\end{array}$$

We define
\begin{equation*}
I_{n}(q)=D_n(q^2)+q\overline{D}_n(q^2) \quad\textrm{for $n\geqslant 2$}.
\end{equation*}
Let $I_{n}(q)=\sum_{k\geqslant 0}I_{n,k}q^k$. Then for $n\geqslant 2$, we have
\begin{equation*}\label{Trirelation}
I_{n,k}=\begin{cases}
\overline{D}_{n,\frac{k-1}{2}}& \text{if $k$ is odd},\\
D_{n,\frac{k}{2}} & \text{if $k$ is even}.
\end{cases}
\end{equation*}

Recall that $D_{n,0}=\#\{\pi\in \md:\cpk(\pi)=0\}$. Take a
permutation
$$\pi=(\pi({p_1}),\ldots)(\pi({p_2}),\ldots)\cdots(\pi({p_k}),\ldots)$$
counted by $D_{n,0}$. When $n\geqslant 2$, erasing the parentheses, we get a partition of $[n]$ into $k$ blocks of size at least 2. Therefore, $D_{n,0}=\sum_{k\geqslant 1}T(n,k)$ for $n\geqslant 2$, where $T(n,k)$ is the associated Stirling numbers of second kind.
Set $I_1(q)=1$. For $1\leqslant n\leqslant 6$, the coefficients of $I_n(q)$
can be arranged as follows with $I_{n,k}$ in row $n$ and column $k$:
$$\begin{array}{cccccccc}
  1 &  &  &  & & &&\\
  1 & 1 &  &  & & &&\\
  1 & 2 & 1 &  & & &&\\
  4 & 7 & 5 & 2 & &  &&\\
  11 & 28 & 28 & 16 & 5& &&\\
  41 & 131 & 153 &118  & 71&16 &&\\
\end{array}$$

Note that
$$M_{2k+1,k}=D_{2k+1,k}=\#\{\pi\in S_{2k+1}:\cpk(\pi)=k\}$$
and
$$\overline{M}_{2k+2,k}=\overline{D}_{2k+2,k}=\#\{\pi\in S_{2k+2}:\cval(\pi)=k\}.$$
Then from~\cite[Proposition 2.1]{Elizalde11}, we get the following result.
\begin{proposition}
For $n\geqslant 0$, we have
$R_{n+1,n}=I_{n+1,n}=E_{n}.$
\end{proposition}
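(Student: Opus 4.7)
The plan is to handle the $R$ and $I$ cases in parallel. The paper already records $M_{2k+1,k}=D_{2k+1,k}$ and $\overline{M}_{2k+2,k}=\overline{D}_{2k+2,k}$, because a permutation attaining the maximum number of cyclic peaks (resp.\ valleys) will turn out to be a single long cycle and hence a derangement; this reduces $I_{n+1,n}=E_n$ to $R_{n+1,n}=E_n$. Using $\deg M_n=\lrf{(n-1)/2}$ and $\deg \overline{M}_n=\lrf{n/2}-1$, we read off
\[
R_{n+1,n}=\begin{cases}M_{2k+1,k},&n=2k,\\ \overline{M}_{2k+2,k},&n=2k+1,\end{cases}
\]
while the base cases $n=0,1$ are checked directly: $R_{1,0}=I_{1,0}=1=E_0$ and $R_{2,1}=I_{2,1}=1=E_1$.

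For $n=2k$, I would first prove a structural claim: any $\pi\in\mathfrak{S}_{2k+1}$ with $\cpk(\pi)=k$ must be a single $(2k+1)$-cycle $(c_1,c_2,\ldots,c_{2k+1})$ of up-down shape $c_1<c_2>c_3<\cdots>c_{2k+1}$. Within a cycle of length $\ell$, cyclic peaks occupy interior positions $2,\ldots,\ell-1$ and must alternate with valleys, giving at most $\lrf{(\ell-1)/2}$ peaks per cycle; summing over cycles bounds the total by $(n-s-c_{\mathrm{ev}})/2$, where $s$ is the number of cycles and $c_{\mathrm{ev}}$ the number of even cycles. For $n=2k+1$ this equals $k$ only when $s=1$ and $c_{\mathrm{ev}}=0$, and strict peak/valley alternation then forces peaks into the even interior positions, yielding the claimed up-down pattern. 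Setting $c_1=1$, the tail $c_2c_3\cdots c_{2k+1}$ becomes an alternating permutation of $\{2,\ldots,2k+1\}$ of length $2k$, so the count is $E_{2k}=E_n$.

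For $n=2k+1$, I would run the same template on cyclic valleys, but with a sharper per-cycle bound: position $2$ of a cycle in standard form is never a valley, since $c_1$ is the minimum, so a cycle of length $\ell$ carries at most $\lrf{(\ell-2)/2}$ cyclic valleys. The corresponding summation forces $\cval(\pi)=k$ for $\pi\in\mathfrak{S}_{2k+2}$ to occur only when $\pi$ is a single $(2k+2)$-cycle, and alternation then produces the pattern $c_1<c_2>c_3<\cdots<c_{2k+1}<c_{2k+2}$. Setting $c_1=1$ reduces the tail to an alternating permutation of $\{2,\ldots,2k+2\}$ of length $2k+1$, giving $E_{2k+1}=E_n$. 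Both counts can equally be seen as the single-cycle component extracted from the Elizalde--Deutsch enumeration of cycle up-down permutations via the exponential formula.

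The main obstacle will be the valley structural step: the sharper bound $\lrf{(\ell-2)/2}$ is essential to rule out, for example, a $(2k+1)$-cycle together with a fixed point, which would otherwise also meet the coarser bound and spoil the uniqueness of the single-cycle maximiser. Once the single-cycle reductions are secured in both parities, the closing enumerations via alternating permutations and the passage from $R$ to $I$ through the derangement observation are routine.
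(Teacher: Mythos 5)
Your proof is correct and takes essentially the same approach as the paper: the paper likewise reduces the top coefficients to $M_{2k+1,k}=D_{2k+1,k}$ and $\overline{M}_{2k+2,k}=\overline{D}_{2k+2,k}$ and then simply cites Proposition 2.1 of Elizalde--Deutsch on cycle up-down permutations, which encapsulates exactly the single up-down cycle characterization and the $E_n$ enumeration that you prove by hand. Your explicit per-cycle bounds (in particular the sharper valley bound $\lfloor(\ell-2)/2\rfloor$, stated with the understanding that a fixed point contributes $0$) and the direct base-case checks for $n=0,1$ merely make self-contained what the paper delegates to that citation.
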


Let $a_0,a_1,a_2\ldots,a_n$ be a sequence of nonnegative real numbers.
The sequence is said to be {\it log-concave} if $a_{i-1}a_{i+1}\leqslant {a^2_i}$ for $i=1,2,\ldots,n-1$.
We end this section by proposing the following.
\begin{conjecture}
For $n\geqslant 1$, the sequences of coefficients of the polynomials $R_n(x)$ and $I_n(x)$ are both log-concave.
\end{conjecture}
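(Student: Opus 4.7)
The plan is to prove the conjecture by directly establishing the coefficient inequalities rather than via real-rootedness. Indeed, the data show that real-rootedness already fails at $n=4$: $R_4(q)=2q^3+9q^2+22q+15=(q+1)(2q^2+7q+15)$, and the quadratic factor has discriminant $49-120<0$. Newton's inequalities are therefore unavailable, and log-concavity of $(R_{n,k})$ and $(I_{n,k})$ must be attacked more directly.

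A useful first reduction is the following. Since $R_{n,2j}=M_{n,j}$ and $R_{n,2j+1}=\overline{M}_{n,j}$, log-concavity of $(R_{n,k})_{k\ge 0}$ is equivalent to the pair of cross-families
\begin{equation*}
\overline{M}_{n,j-1}\,\overline{M}_{n,j} \le M_{n,j}^2
\qquad\text{and}\qquad
M_{n,j}\,M_{n,j+1} \le \overline{M}_{n,j}^2 \qquad (j\ge 1),
\end{equation*}
together with the analogous pair for $(D_{n,j},\overline{D}_{n,j})$ in place of $(M_{n,j},\overline{M}_{n,j})$. Combining three consecutive cross inequalities (two of one type and one of the other) shows that they imply log-concavity of each strand $(M_{n,j})_j$ and $(\overline{M}_{n,j})_j$ individually, so the cross inequalities are strictly stronger than log-concavity of either strand alone, and form the true target.

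I would first address log-concavity of each strand by induction on $n$. The natural source of recurrences is Corollary~\ref{Pnqxy-recu} for the bivariate polynomial $A_n(q,x):=P_n(q,x,1)$, which closes in $(q,x)$, together with its analogue for $V_n(q,x,1)$. Log-concavity in $q$ of the evaluations $A_n(q,1)=M_n(q)$ and the corresponding $\overline{M}_n(q)$ can then be propagated using a Liu--Wang type preservation theorem for polynomials defined by linear differential-type recurrences in $q$ with auxiliary $x$-derivatives, provided one tracks a suitable joint log-concavity property of $A_n(q,x)$ --- for instance, log-concavity of each coefficient sequence in $q$ for every fixed power of $x$. The derangement case is analogous, with the additional two-level term $nxV_{n-1}(q,x,0)$ in~\eqref{Vnqx0} handled by tracking both $n$ and $n-1$ simultaneously.

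The principal obstacle is the cross inequalities between $M_{n,j}$ and $\overline{M}_{n,j}$, which capture a fine balance between cyclic peaks and cyclic valleys that transcends log-concavity of either statistic alone. The natural attack is combinatorial: use the switching involution $\phi$ from the proof of Theorem~\ref{vntsr}, which interchanges $\cpk$ and $\cval$ cycle-by-cycle, as the scaffolding for an injection from pairs in $\msn\times\msn$ with cyclic-valley counts $(j-1,j)$ into pairs with equal cyclic-peak count $j$. A backup analytic route would combine the exponential generating functions of Section~\ref{GF} with total-positivity criteria (Aissen--Schoenberg--Whitney applied to a suitable Toeplitz matrix of coefficients). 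I expect this cross-inequality step to be the genuinely hard part of the plan.
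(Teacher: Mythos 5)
The statement you were asked to prove is, in the paper, an explicit \emph{conjecture}: the authors propose it at the end of Section~\ref{Euler} and give no proof, so there is no argument of theirs to compare yours against; the only question is whether your proposal closes the problem, and it does not. Two of your preliminary observations are correct and worthwhile: $R_4(q)=(q+1)(2q^2+7q+15)$ does show that real-rootedness (hence the Newton-inequality route) fails, and since $R_{n,2j}=M_{n,j}$ and $R_{n,2j+1}=\overline{M}_{n,j}$, log-concavity of $(R_{n,k})_k$ is indeed equivalent to the interleaved cross-inequalities $\overline{M}_{n,j-1}\overline{M}_{n,j}\leqslant M_{n,j}^2$ and $M_{n,j}M_{n,j+1}\leqslant\overline{M}_{n,j}^2$ (and similarly for $I_n$ with $D_{n,j},\overline{D}_{n,j}$). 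But beyond these reformulations every load-bearing step is deferred to machinery that is either unverified or inapplicable: the ``Liu--Wang type preservation'' step for the individual strands is invoked, not proved, and the recurrences actually available --- Corollary~\ref{Pnqxy-recu} and \eqref{Pnqx0}, \eqref{Vnqx0}, which are three-term in $n$ and mix $q$- and $x$-derivatives --- are not of a shape covered by off-the-shelf log-concavity preservation theorems; and for the cross-inequalities, which you yourself identify as the crux, no argument is given at all.

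Two concrete defects deserve naming. First, your analytic backup is self-defeating: the Aissen--Schoenberg--Whitney theorem characterizes \emph{totally} positive sequences as exactly those whose generating polynomial has only real nonpositive zeros, and your own factorization of $R_4$ shows this fails here; log-concavity is the much weaker $\mathrm{PF}_2$ property (nonnegativity of the $2\times 2$ Toeplitz minors), about which ASW says nothing, so that route cannot work even in principle. Second, the switching map $\phi$ is weaker scaffolding than you assume: it does not return permutations written in standard cycle form, so it is \emph{not} an involution on $\msn$ exchanging $\cpk$ and $\cval$. For instance $\phi((1,3,2))=(3,1,2)$, whose standard form $(1,2,3)$ has no cyclic valley; and indeed the two statistics are not equidistributed ($M_3(q)=5+q$ while $\overline{M}_3(q)=6$), which is precisely why the cross-inequalities compare families of genuinely different sizes. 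In the paper $\phi$ is used only locally, to count insertion slots in the proof of Theorem~\ref{vntsr}, and an injection from pairs with $\cval$-counts $(j-1,j)$ to pairs with $\cpk$-count $j$ built on it would require a substantially new idea. In short: this is a sensible research plan containing two correct reductions, but it proves nothing beyond them, and the conjecture remains exactly as open as the paper leaves it.
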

\section{Generating functions}\label{GF}
We present in the present section the exponential generating functions of $P_n(q,x,y)$ and $V_n(q,x,y)$. Towards this end, we define
$$
P(q,x,y,z):=\sum_{n\geqslant 0}P_n(q,x,y)\frac{z^n}{n!}\qquad\textrm{and}\qquad V(q,x,y,z):=\sum_{n\geqslant 0}V_n(q,x,y)\frac{z^n}{n!}.
$$
Instead of computing $P(q,x,y,z)$ and $V(q,x,y,z)$ directly, we first consider $P(q,x,0,z)$ and $V(q,x,0,z)$ since their recurrence relations involve less partial derivatives.

\begin{theorem}\label{mthm:01}
The generating functions $P=P(q,x,0,z)$ and $V=V(q,x,0,z)$ satisfy the following partial differential equations $($PDEs$)$
\begin{equation*}
\begin{split}
(1-qz)\frac{\partial P}{\partial z}+2q(q-1)\frac{\partial P}{\partial q}+x(q-1)\frac{\partial P}{\partial x}&=xzP,\\
(1-qz)\frac{\partial V}{\partial z}+2q(q-1)\frac{\partial V}{\partial q}+2x(q-1)\frac{\partial V}{\partial x}&=xzV,
\end{split}
\end{equation*}
and whose solutions are
\begin{equation*}
\begin{split}
P(q,x,0,z)&=e^{-xz}\biggl[\biggl(\frac{\sqrt{q}-1}{\sqrt{q}+1}\biggr)\biggl(\frac{\sqrt{q}+\cos(z\sqrt{q-1})+\sqrt{q-1}\sin(z\sqrt{q-1})}{\sqrt{q}-\cos(z\sqrt{q-1})-\sqrt{q-1}\sin(z\sqrt{q-1})}\biggr)\biggr]^{\frac{x}{2\sqrt{q}}},\\
V(q,x,0,z)&=e^{-xz/q}\biggl(\frac{\sqrt{q-1}}{\sqrt{q-1}\cos(z\sqrt{q-1})-\sin(z\sqrt{q-1})}\biggr)^{\frac{x}{q}},
\end{split}
\end{equation*}
respectively.
\end{theorem}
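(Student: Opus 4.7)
The plan is to convert each recurrence into a first-order PDE, solve by characteristics, and impose boundary data. Multiplying \eqref{Pnqx0} (respectively \eqref{Vnqx0}) by $z^n/n!$ and summing over $n\geqslant 1$, the standard identifications $\sum_{n} P_{n+1}z^n/n! = \partial_z P$, $\sum_n nP_n z^n/n! = z\partial_z P$, and $\sum_n nP_{n-1}z^n/n! = zP$, together with the analogous identities for derivatives in $q$ and $x$, immediately produce the two displayed PDEs. The initial values $P_0(q,x,0)=V_0(q,x,0)=1$ translate to the Cauchy data $P(q,x,0,0)=V(q,x,0,0)=1$, which will fix the arbitrary function that characteristics introduce.

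To solve, I would reduce each PDE to an $x$-free scalar equation in two stages. The substitutions $V=e^{-xz/q}W$ and $P=e^{-xz}Q$ are engineered so that the exponential factors exactly cancel the inhomogeneous terms $xzV$ and $xzP$ on the right; a short calculation leaves equations in $W$ and $Q$ whose $x$-dependence is log-linear. Writing $W=G(q,z)^{x/q}$ and $Q=G(q,z)^{x/(2\sqrt{q})}$ then kills the $x$-dependence entirely (the coefficients of $\log G$ in $x$ cancel identically), reducing both problems to the scalar form
\begin{align*}
(1-qz)G_z + 2q(q-1)G_q &= G \qquad \text{(the $V$ case)},\\
(1-qz)G_z + 2q(q-1)G_q &= 2\sqrt{q}\,G \qquad \text{(the $P$ case)}.
\end{align*}

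The method of characteristics now applies. The subsystem $dz/(1-qz)=dq/(2q(q-1))$, viewed as a linear ODE with integrating factor $\sqrt{q-1}$, yields the invariant $z\sqrt{q-1}-\arctan\sqrt{q-1}$, common to both cases. Pairing $dG/G$ with $dq/(2q(q-1))$ for $V$ and with $dq/(\sqrt{q}(q-1))$ for $P$, then substituting $u=\sqrt{q}$ and applying partial fractions, produces the second invariants $G\sqrt{q/(q-1)}$ and $G(\sqrt{q}+1)/(\sqrt{q}-1)$, respectively. The condition $G(q,0)=1$ identifies the arbitrary function in each case.

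The main obstacle is the final trigonometric reassembly. For $V$, the sine-subtraction formula together with $\sin(\arctan\sqrt{q-1})=\sqrt{(q-1)/q}$ and $\cos(\arctan\sqrt{q-1})=1/\sqrt{q}$ delivers the claimed closed form in one step. For $P$, the corresponding step produces $\cot^2(\theta'/2)$ with $\theta'=\arctan\sqrt{q-1}-z\sqrt{q-1}$, and the half-angle identity $\cot^2(\theta'/2)=(1+\cos\theta')/(1-\cos\theta')$ must be unfolded via the cosine-subtraction formula before the long fraction in the statement emerges. A clean fallback, should this bookkeeping prove awkward, is to verify the stated formulas by direct substitution into the PDEs and check the value at $z=0$, which is mechanical once the requisite partial derivatives are written down.
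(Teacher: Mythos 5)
Your proposal is correct and is essentially the paper's own proof: both convert the recurrences \eqref{Pnqx0} and \eqref{Vnqx0} into the stated PDEs, solve by the method of characteristics with the same first invariant $\arctan\sqrt{q-1}-z\sqrt{q-1}$ (obtained from the same integrating-factor computation), pin down the arbitrary function from the data at $z=0$ via $\cos(\arctan\sqrt{q-1})=1/\sqrt{q}$, $\sin(\arctan\sqrt{q-1})=\sqrt{(q-1)/q}$, and finish with the same angle-subtraction/half-angle reassembly. The only difference is organizational: where the paper posits $V=v(q,z)^x$ (justified by the $\beta$-extension), sets $w=\ln v$, and \emph{derives} the factors $e^{-xz/q}$ and the exponent $x/q$ by integrating an inhomogeneous linear ODE along the characteristics, you peel those factors off in advance so that the residual scalar equations $(1-qz)\,\partial_z G+2q(q-1)\,\partial_q G=G$ and $(1-qz)\,\partial_z G+2q(q-1)\,\partial_q G=2\sqrt{q}\,G$ are homogeneous and separable along characteristics (your stated substitutions and second invariants $G\sqrt{q/(q-1)}$ and $G(\sqrt{q}+1)/(\sqrt{q}-1)$ do check out), and you additionally carry out the $P$ case explicitly, which the paper leaves to the reader.
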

\begin{proof}
We first prove the assertions for $V=V(q,x,0,z)$.
Multiplying \eqref{Vnqx0} by $z^n/n!$, followed by summing over $n\geqslant 1$, and using the fact that $V_1(q,x,0)=0$, we get
\begin{equation*}
\begin{split}
\sum_{n\geqslant 1}V_{n+1}(q,x,0)\frac{z^n}{n!}&=\sum_{n\geqslant 1}\biggl[nqV_n(q,x,0)+2q(1-q)\frac{\partial V_n(q,x,0)}{\partial q}+2x(1-q)\frac{\partial V_n(q,x,0)}{\partial x}\\
&\quad+nxV_{n-1}(q,x,0)\biggr]\frac{z^n}{n!}
\end{split}
\end{equation*}
whence the PDE for $V$:
\begin{equation}\label{eqn:03}
(1-qz)\frac{\partial V}{\partial z}+2q(q-1)\frac{\partial V}{\partial q}+2x(q-1)\frac{\partial V}{\partial x}=xzV.
\end{equation}
In view of the so-called $\beta$-extension \cite{fz88} (see also \cite[\S7]{b00} for an instance of it), we may assume that $V(q,x,0,z)=v(q,z)^x$ for some function $v$. Then
$$
\frac{\partial V}{\partial z}=xv^{x-1}\frac{\partial v}{\partial z},\quad\frac{\partial V}{\partial q}=xv^{x-1}\frac{\partial v}{\partial q},\quad
\frac{\partial V}{\partial z}=v^x\ln v
$$
so that \eqref{eqn:03} becomes
\begin{equation}\label{eqn:04}
(1-qz)\frac{\partial v}{\partial z}+2q(q-1)\frac{\partial v}{\partial q}=zv-2(q-1)v\ln v.
\end{equation}
Next, we let $w=\ln v$. Then
$$
\frac{1}{v}\frac{\partial v}{\partial z}=\frac{\partial w}{\partial z},\qquad\frac{1}{v}\frac{\partial v}{\partial q}=\frac{\partial w}{\partial q}
$$
and \eqref{eqn:04} becomes the following linear PDE:
\begin{equation}\label{eqn:05}
(1-qz)\frac{\partial w}{\partial z}+2q(q-1)\frac{\partial w}{\partial q}=z-2(q-1)w
\end{equation}
with auxiliary system
$$
\frac{dz}{1-qz}=\frac{dq}{2q(q-1)}=\frac{dw}{z-2(q-1)w}.
$$
Solving the ordinary differential equation arising from the first equality, namely,
$$
\frac{dz}{dq}+\frac{z}{2(q-1)}=\frac{1}{2q(q-1)},
$$
we obtain the characteristic defined by
$$
c_1=\tan^{-1}\sqrt{q-1}-z\sqrt{q-1},
$$
where $c_1$ is an arbitrary constant. On this characteristic, \eqref{eqn:05} becomes the following linear ODE:
$$
2q(q-1)\frac{dw}{dq}=\frac{\tan^{-1}\sqrt{q-1}-c_1}{\sqrt{q-1}}-2(q-1)w
$$
whose solution is
\begin{equation*}
\begin{split}
w(q,z)&=-\frac{z}{q}+\frac{1}{2q}\ln\biggl(\frac{q-1}{q}\biggr)+\frac{c_2}{q}\\
&=-\frac{z}{q}+\frac{1}{2q}\ln\biggl(\frac{q-1}{q}\biggr)+\frac{f(\tan^{-1}\sqrt{q-1}-z\sqrt{q-1})}{q}
\end{split}
\end{equation*}
where $c_2=f(c_1)$ and $f$ is a function to be determined.

Since $V(q,x,0,0)=v(q,0)^x=1$ for all $x$, it follows that $v(q,0)=1$ so that
$$
f(\tan^{-1}\sqrt{q-1})=\frac{1}{2}\ln\biggl(\frac{q}{q-1}\biggr).
$$
Letting $u=\tan^{-1}\sqrt{q-1}$, we then have $q=1+\tan^2u=\sec^2u$ and hence
$$
f(u)=\frac{1}{2}\ln\biggl(\frac{\sec^2u}{\sec^2u-1}\biggr)=-\ln\sin u.
$$
Since
\begin{equation*}
\begin{split}
f(\tan^{-1}\sqrt{q-1}-z\sqrt{q-1})&=-\ln\sin(z\sqrt{q-1}-z\sqrt{q-1})\\
&=\ln\biggl(\frac{\sqrt q}{\sqrt{q-1}\cos(z\sqrt{q-1})-\sin(z\sqrt{q-1})}\biggr),
\end{split}
\end{equation*}
we obtain that
$$
V(q,x,0,z)=v(q,z)^x=e^{wx}=e^{-xz/q}\biggl(\frac{\sqrt{q-1}}{\sqrt{q-1}\cos(z\sqrt{q-1})-\sin(z\sqrt{q-1})}\biggr)^{\frac{x}{q}}.
$$
By imitating the above calculations, one can obtain the corresponding assertions for $P=P(q,x,0,z)$, whose details are left to the interested readers.
\end{proof}

\begin{corollary}\label{pqxyz-vqxyz}
The exponential generating functions $P(q,x,y,z)$ and $V(q,x,y,z)$ have the following explicit expressions$:$
\begin{equation*}
\begin{split}
P(q,x,y,z)&=e^{xz(y-1)}\biggl[\biggl(\frac{\sqrt{q}-1}{\sqrt{q}+1}\biggr)
\biggl(\frac{\sqrt{q}+\cos(z\sqrt{q-1})+\sqrt{q-1}\sin(z\sqrt{q-1})}
{\sqrt{q}-\cos(z\sqrt{q-1})-\sqrt{q-1}\sin(z\sqrt{q-1})}\biggr)\biggr]^{\frac{x}{2\sqrt{q}}},\\
V(q,x,y,z)&=e^{xz(y-1/q)}\biggl(\frac{\sqrt{q-1}}{\sqrt{q-1}\cos(z\sqrt{q-1})-\sin(z\sqrt{q-1})}\biggr)^{\frac{x}{q}},
\end{split}
\end{equation*}
respectively.
\end{corollary}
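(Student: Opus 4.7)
The key observation is that the $y$-dependence in $P_n$ and $V_n$ is carried only by fixed points, which, being cycles of length one, contribute neither cyclic peaks nor cyclic valleys. The plan therefore is to establish the clean factorisations
\begin{equation*}
P(q,x,y,z)=e^{xyz}P(q,x,0,z)\qquad\text{and}\qquad V(q,x,y,z)=e^{xyz}V(q,x,0,z),
\end{equation*}
and then to substitute the closed forms furnished by Theorem~\ref{mthm:01}.

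To obtain the first factorisation, I would split each $\pi\in\msn$ according to its set $F\subseteq[n]$ of fixed points. The restriction of $\pi$ to $[n]\setminus F$ is a derangement of that set, while each element of $F$ forms a singleton cycle that contributes $x$ to $\cyc$, $y$ to $\fix$, and (having length one) neither a cyclic peak nor a cyclic valley. Classifying by $r=|F|$ yields the binomial convolutions
\begin{equation*}
P_n(q,x,y)=\sum_{r=0}^n\binom{n}{r}(xy)^rP_{n-r}(q,x,0),\qquad V_n(q,x,y)=\sum_{r=0}^n\binom{n}{r}(xy)^rV_{n-r}(q,x,0),
\end{equation*}
which, under the operation $\sum_{n\geqslant 0}z^n/n!$, translate into the two product formulae displayed above. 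Plugging the closed forms of Theorem~\ref{mthm:01} into these identities and absorbing $e^{xyz}$ into the leading exponential factor converts $e^{-xz}$ into $e^{xz(y-1)}$ and $e^{-xz/q}$ into $e^{xz(y-1/q)}$, matching the statement of Corollary~\ref{pqxyz-vqxyz} exactly.

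I do not expect any serious obstacle. The one point meriting explicit verification is that adjoining singleton cycles to a derangement introduces no new cyclic peak or valley, which is immediate because these statistics are defined using an index $m$ with $2\leqslant m\leqslant i_j-1$ in a cycle $(c_1^j,\ldots,c_{i_j}^j)$, forcing the cycle to have length at least three. As an optional consistency check, one may translate Corollaries~\ref{Pnqxy-recu} and~\ref{Vnqxy-recu} into PDEs for $P(q,x,y,z)$ and $V(q,x,y,z)$ and verify directly that $e^{xyz}$ times the expressions of Theorem~\ref{mthm:01} satisfies them, together with the initial condition at $z=0$.
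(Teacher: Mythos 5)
Your proposal is correct and follows essentially the same route as the paper: the paper likewise decomposes each permutation by its set of fixed points to obtain the binomial convolution $V_n(q,x,y)=\sum_{k=0}^n\binom{n}{k}V_{n-k}(q,x,0)(xy)^k$ (noting that only cycles of length at least three carry cyclic peaks or valleys), converts this to the product $e^{xyz}V(q,x,0,z)$ at the level of exponential generating functions, substitutes Theorem~\ref{mthm:01}, and handles $P$ by the same argument. Your additional remarks (the length-three observation and the optional PDE consistency check) are sound but not needed beyond what the paper already does.
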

\begin{proof}
Let $\sigma\in\mathfrak{S}_n$ have $k$ fixed points. There are $\binom{n}{k}$ choices of fixed points. If $\tau$ is the partial permutation obtained by deleting all the fixed points of $\sigma$, then $\cyc(\sigma)=\cyc(\tau)+k$ and $\cval(\sigma)=\cval(\tau)$ since only $\ell$-cycles ($\ell\geqslant 3$) of $\sigma$ contribute to $\cval(\sigma)$. Thus, $V_n(q,x,y)=\sum_{k=0}^n\binom{n}{k}V_{n-k}(q,x,0)(xy)^k$ follows. Hence,
\begin{equation*}
\begin{split}
\sum_{n\geqslant 0}V_n(q,x,y)\frac{z^n}{n!}&=\sum_{n\geqslant 0}\sum_{k=0}^n\binom{n}{k}V_{n-k}(q,x,0)(xy)^k\frac{z^n}{n!}\\
&=\sum_{k\geqslant 0}\frac{(xyz)^k}{k!}\sum_{n\geqslant k}V_{n-k}(q,x,0)\frac{z^{n-k}}{(n-k)!}\\
&=e^{xyz}e^{-xz/q}\biggl(\frac{\sqrt{q-1}}{\sqrt{q-1}\cos(z\sqrt{q-1})-\sin(z\sqrt{q-1})}\biggr)^{\frac{x}{q}}.
\end{split}
\end{equation*}
thus proving the assertion for $V(q,x,y,z)$. The corresponding assertion for $P(q,x,y,z)$ follows from similar consideration.
\end{proof}

In Section~\ref{Euler}, we combinatorially prove
that the $n$th Bell number $B_n$ is the constant term of $M_n(0)$, i.e.,
$$
M_n(0)=M_{n,0}=B_n.
$$
We now present a generating function proof of this result.
Since $M_n(q)=P_n(q,1,1)$, we have
\begin{equation*}
\begin{split}
\sum_{n\geqslant 0}M_n(0)\frac{z^n}{n!}&=\lim_{q\to 0}\sum_{n\geqslant 0}P_n(q,1,1)\frac{z^n}{n!}\\
&=\lim_{q\to 0}\biggl[\biggl(\frac{\sqrt{q}-1}{\sqrt{q}+1}\biggr)
\biggl(\frac{\sqrt{q}+\cos(z\sqrt{q-1})+\sqrt{q-1}\sin(z\sqrt{q-1})}{\sqrt{q}-\cos(z\sqrt{q-1})-\sqrt{q-1}\sin(z\sqrt{q-1})}\biggr)\biggr]^{\frac{1}{2\sqrt{q}}}.
\end{split}
\end{equation*}
Denote the limit on the right by $L$. It is easy to see that $L$ is of the indeterminate form $1^\infty$.
So, by l'H\^opital's rule, we have
\begin{equation*}
\begin{split}
\ln L&=\lim_{q\to 0}\frac{1}{2\sqrt{q}}\ln\biggl(\frac{\sqrt{q}-1}{\sqrt{q}+1}\biggr)
\biggl(\frac{\sqrt{q}+\cos(z\sqrt{q-1})+\sqrt{q-1}\sin(z\sqrt{q-1})}{\sqrt{q}-\cos(z\sqrt{q-1})-\sqrt{q-1}\sin(z\sqrt{q-1})}\biggr)\\
&=\cosh z+\sinh z-1=e^z-1.
\end{split}
\end{equation*}
Consequently,
$$
\sum_{n\geqslant 0}M_n(0)\frac{z^n}{n!}=e^{e^z-1},
$$
the right side being the exponential generating function of $B_n$, thus proving $M_n(0)=B_n$.

Let $i=\sqrt{-1}$. Note that $\cosh (x)=\cos(ix)$ and $\sinh(x)=-i\sin(ix)$. Combining~\eqref{Wqz-GF}
and Corollary~\ref{pqxyz-vqxyz}, we get the following result.
\begin{theorem}
We have
$$V(q,x,y,z)=e^{xz(y-1/q)}{\overline{W}(q,z)}^{\frac{x}{q}}.$$
\end{theorem}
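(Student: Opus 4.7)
The plan is to show that the second factor in the expression for $V(q,x,y,z)$ given by Corollary~\ref{pqxyz-vqxyz} is precisely $\overline{W}(q,z)^{x/q}$, after which the identity follows by inspection. Concretely, Corollary~\ref{pqxyz-vqxyz} gives
\begin{equation*}
V(q,x,y,z)=e^{xz(y-1/q)}\biggl(\frac{\sqrt{q-1}}{\sqrt{q-1}\cos(z\sqrt{q-1})-\sin(z\sqrt{q-1})}\biggr)^{x/q},
\end{equation*}
while \eqref{Wqz-GF} gives
\begin{equation*}
\overline{W}(q,z)=\frac{\sqrt{1-q}}{\sqrt{1-q}\cosh(z\sqrt{1-q})-\sinh(z\sqrt{1-q})}.
\end{equation*}
So the task reduces to verifying the single identity
\begin{equation*}
\frac{\sqrt{q-1}}{\sqrt{q-1}\cos(z\sqrt{q-1})-\sin(z\sqrt{q-1})}=\frac{\sqrt{1-q}}{\sqrt{1-q}\cosh(z\sqrt{1-q})-\sinh(z\sqrt{1-q})}.
\end{equation*}

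To carry this out, I would set $\sqrt{q-1}=i\sqrt{1-q}$ (choosing a consistent branch of the square root), and then apply the hinted relations $\cosh(u)=\cos(iu)$ and $\sinh(u)=-i\sin(iu)$, which are equivalent to $\cos(iu)=\cosh(u)$ and $\sin(iu)=i\sinh(u)$. Substituting $u=z\sqrt{1-q}$ yields
\begin{equation*}
\cos(z\sqrt{q-1})=\cosh(z\sqrt{1-q}),\qquad \sin(z\sqrt{q-1})=i\sinh(z\sqrt{1-q}).
\end{equation*}
Plugging these into the denominator on the left-hand side gives
\begin{equation*}
\sqrt{q-1}\cos(z\sqrt{q-1})-\sin(z\sqrt{q-1})=i\bigl[\sqrt{1-q}\cosh(z\sqrt{1-q})-\sinh(z\sqrt{1-q})\bigr],
\end{equation*}
while the numerator becomes $i\sqrt{1-q}$. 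The factors of $i$ cancel, producing exactly $\overline{W}(q,z)$.

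Once this equality is in hand, raising both sides to the power $x/q$ and multiplying by $e^{xz(y-1/q)}$ recovers the claimed formula $V(q,x,y,z)=e^{xz(y-1/q)}\overline{W}(q,z)^{x/q}$. There is essentially no obstacle here: the only mild subtlety is the consistent choice of branches for the square roots and complex trigonometric functions, but as formal power series in $z$ the identity is entirely unambiguous (both sides are even analytic functions of $\sqrt{q-1}$), so the identity holds as an identity of formal generating functions in the ring $\mathbb{Q}(q)[x,y][[z]]$ after clearing these branch choices.
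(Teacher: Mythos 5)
Your proof is correct and follows essentially the same route as the paper, which likewise derives the identity by combining Corollary~\ref{pqxyz-vqxyz} with \eqref{Wqz-GF} via the relations $\cosh(x)=\cos(ix)$ and $\sinh(x)=-i\sin(ix)$. You merely spell out the substitution $\sqrt{q-1}=i\sqrt{1-q}$ and the cancellation of the factors of $i$ in more detail than the paper does.
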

\section{A Pell number identity}\label{pell}
The Pell numbers $P_n$ are defined by the recurrence relation
$$
P_n=2P_{n-1}+P_{n-2},\qquad n=2,3,\ldots,
$$
with initial values $P_0=0$ and $P_1=1$. Other known facts about the Pell numbers include the Binet formula: for $n=0,1,2,\ldots$,
$$
P_n=\frac{(1+\sqrt{2})^n-(1-\sqrt{2})^n}{2\sqrt{2}},
$$
and the exponential generating function:
$$
\sum_{n=0}^\infty P_n\frac{y^n}{n!}=\frac{e^{(1+\sqrt{2})y}-e^{(1-\sqrt{2})y}}{2\sqrt{2}}.
$$
The Pell numbers can be expressed as $(-1)$-evaluation of $V_n(q,1,0)$, as the next theorem shows.
\begin{theorem}
We have for $n\geqslant 1,$
\begin{equation}\label{eqn:01}
\sum_{\sigma\in \md}(-1)^{\cval(\sigma)}=P_{n-1}.
\end{equation}
\end{theorem}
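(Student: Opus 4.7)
The plan is to evaluate the generating function for $\overline{D}_n(q)=V_n(q,1,0)$ given in Theorem~\ref{mthm:01} at $q=-1$ and recognize the result as a shifted version of the exponential generating function for the Pell numbers. Concretely, setting $x=1$ and $q=-1$ in the explicit formula
\[
V(q,x,0,z)=e^{-xz/q}\left(\frac{\sqrt{q-1}}{\sqrt{q-1}\cos(z\sqrt{q-1})-\sin(z\sqrt{q-1})}\right)^{x/q}
\]
introduces $\sqrt{q-1}=\sqrt{-2}=i\sqrt{2}$, so the cosine and sine become hyperbolic via $\cos(i\sqrt{2}\,z)=\cosh(\sqrt{2}\,z)$ and $\sin(i\sqrt{2}\,z)=i\sinh(\sqrt{2}\,z)$. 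The exponent $x/q=-1$ inverts the bracketed factor, leaving no fractional powers and no branch-cut issues.

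After this substitution, I expect
\[
V(-1,1,0,z)=e^{z}\cdot\frac{\sqrt{2}\cosh(\sqrt{2}\,z)-\sinh(\sqrt{2}\,z)}{\sqrt{2}}.
\]
Expanding $\cosh$ and $\sinh$ in exponentials and multiplying by $e^{z}$ gives
\[
V(-1,1,0,z)=\frac{(\sqrt{2}-1)e^{(1+\sqrt{2})z}+(\sqrt{2}+1)e^{(1-\sqrt{2})z}}{2\sqrt{2}}.
\]

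To finish, I would recall the exponential generating function $\sum_{n\geqslant 0}P_n\,y^n/n!=(e^{(1+\sqrt{2})y}-e^{(1-\sqrt{2})y})/(2\sqrt{2})$ stated just above the theorem, integrate it from $0$ to $z$ using $1/(1+\sqrt{2})=\sqrt{2}-1$ and $1/(1-\sqrt{2})=-(\sqrt{2}+1)$, and observe that the resulting series coincides with $V(-1,1,0,z)-1$. Since $V(-1,1,0,z)-1=\sum_{n\geqslant 1}\overline{D}_n(-1)\,z^n/n!$, equating coefficients of $z^n/n!$ yields $\overline{D}_n(-1)=P_{n-1}$, which is \eqref{eqn:01}.

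The only genuinely delicate step is the passage from the trigonometric form to the hyperbolic form and the subsequent simplification; no branch of square root needs to be chosen because the exponent $x/q=-1$ removes the root entirely. Everything else is a direct comparison of generating functions, so I do not anticipate any real obstacle beyond bookkeeping.
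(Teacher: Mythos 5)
Your proposal is correct and follows essentially the same route as the paper: both substitute $q=-1$, $x=1$ into the closed form of $V(q,x,0,z)$ from Theorem~\ref{mthm:01}, simplify (your hyperbolic intermediate step $e^z\bigl(\sqrt{2}\cosh(\sqrt{2}z)-\sinh(\sqrt{2}z)\bigr)/\sqrt{2}$ agrees with the paper's trigonometric form at $\sqrt{q-1}=i\sqrt{2}$), and identify the result with $1+\sum_{n\geqslant 1}P_{n-1}z^n/n!$ by integrating the Pell generating function with $1/(1+\sqrt{2})=\sqrt{2}-1$ and $1/(1-\sqrt{2})=-(\sqrt{2}+1)$. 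Your remark that the exponent $x/q=-1$ eliminates any fractional-power or branch issues is a valid and worthwhile refinement of the paper's computation.
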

\begin{proof}
By setting $x=1$ and $q=-1$ in $V(q,x,0,z)$, we see that
\begin{equation*}
\begin{split}
V(-1,1,0,z)&=e^z\Bigl(\sqrt{-2}\cos(z\sqrt{-2})-\sin(z\sqrt{-2})\Bigr)(-2)^{-1/2}\\
&=\frac{e^z[(\sqrt{2}+1)e^{-\sqrt{2}z}+(\sqrt{2}-1)e^{\sqrt{2}z}]}{2\sqrt{2}}\\
&=1+\sum_{n\geqslant 1}P_{n-1}\frac{z^n}{n!},
\end{split}
\end{equation*}
since
\begin{equation*}
\begin{split}
\sum_{n\geqslant 1}P_{n-1}\frac{z^n}{n!}&=\int_0^z\sum_{n\geqslant 0}P_n\frac{s^n}{n!}\,ds\\
&=\frac{1}{2\sqrt{2}}\int_0^z(e^{(1+\sqrt{2})s}-e^{(1-\sqrt{2})s})\,ds\\
&=\frac{e^z[(\sqrt{2}-1)e^{\sqrt{2}z}+(\sqrt{2}+1)e^{-\sqrt{2}z}]}{2\sqrt{2}}-1.
\end{split}
\end{equation*}
Equating the coefficients of $z^n$ on both sides, \eqref{eqn:01} follows.
\end{proof}
\section{Relationship to alternating runs}\label{Runs}
Let $\pi=\pi(1)\pi(2)\cdots \pi(n)\in\msn$. We say that $\pi$ changes
direction at position $i$ if either $\pi({i-1})<\pi(i)>\pi(i+1)$, or
$\pi(i-1)>\pi(i)<\pi(i+1)$, where $i\in\{2,3,\ldots,n-1\}$. We say that $\pi$ has $k$ {\it alternating
runs} if there are $k-1$ indices $i$ such that $\pi$ changes
direction at these positions. Let $R(n,k)$ denote the number of
permutations in $\msn$ with $k$ alternating runs.
Andr\'e~\cite{Andre84} found that the numbers $R(n,k)$ satisfy the following recurrence relation
\begin{equation}\label{rnk-recurrence}
R(n,k)=kR(n-1,k)+2R(n-1,k-1)+(n-k)R(n-1,k-2)
\end{equation}
for $n,k\geqslant 1$, where $R(1,0)=1$ and $R(1,k)=0$ for $k\geqslant 1$.

For $n\geqslant 1$, we define
$R_n(q)=\sum_{k=1}^{n-1}R(n,k)q^k$.
Then by~\eqref{rnk-recurrence}, we obtain
\begin{equation}\label{Rnx-recurrence}
R_{n+2}(q)=q(nq+2)R_{n+1}(q)+q\left(1-q^2\right)R_{n+1}'(q),
\end{equation}
with initial value $R_1(q)=1$. The first few terms of $R_n(q)$'s are given as follows:
\begin{align*}
  R_2(q)& =2q, \\
  R_3(q)& =2q+4q^2, \\
  R_4(q)& =2q+12q^2+10q^3,\\
  R_5(q)& =2q+28q^2+58q^3+32q^4.
\end{align*}

There is a large literature devoted to the numbers $R(n,k)$ (see~\cite[\textsf{A059427}]{Sloane}).
The reader is referred to~\cite{CW08,Ma122,Sta08} for recent progress on this subject.
In a series of papers~\cite{Carlitz78,Carlitz80,Carlitz81}, Carlitz studied
generating functions for the numbers $R(n,k)$. In particular,
Carlitz~\cite{Carlitz78} proved that
\begin{equation*}\label{CarlitzGF}
H(q,z)=\sum_{n=0}^\infty \frac{z^n}{n!}\sum_{k=0}^nR(n+1,k)q^{n-k}=\left(\frac{1-q}{1+q}\right)\left(\frac{\sqrt{1-q^2}+\sin (z\sqrt{1-q^2})}{q- \cos (z\sqrt{1-q^2})}\right)^2.
\end{equation*}

Instead of considering $\sum_{k=0}^nR(n+1,k)q^{n-k}$ as Carlitz did, we shall study
$$
R(q,z)=\sum_{n\geqslant 0}R_{n+1}(q)\frac{z^n}{n!}.
$$
It is clear that $R(q,z)=H(\frac{1}{q},qz)$. Hence
$$
R(q,z)=\biggl(\frac{q-1}{q+1}\biggr)\biggl(\frac{\sqrt{q^2-1}+q\sin(z\sqrt{q^2-1})}{1-q\cos(z\sqrt{q^2-1})}\biggr)^2.
$$
Let $u=\sec^{-1}q$, i.e., $\sec u=q$. Then
\begin{equation*}
\begin{split}
\biggl(\frac{\sqrt{q^2-1}+q\sin(z\sqrt{q^2-1})}{1-q\cos(z\sqrt{q^2-1})}\biggr)^2&=\biggl(\frac{\sin u+\sin(z\sqrt{q^2-1})}{\cos u-\cos(z\sqrt{q^2-1})}\biggr)^2\\
&=\frac{\cos^2\Bigl(\frac{u-z\sqrt{q^2-1}}{2}\Bigr)}{\sin^2\Bigl(\frac{u-z\sqrt{q^2-1}}{2}\Bigr)}\\
&=\frac{1+\cos(u-z\sqrt{q^2-1})}{1-\cos(u-z\sqrt{q^2-1})}\\
&=\frac{{q}
+\cos(z\sqrt{q^2-1})+\sqrt{q^2-1}\sin(z\sqrt{q^2-1})}
{{q}-\cos(z\sqrt{q^2-1})-\sqrt{q^2-1}\sin(z\sqrt{q^2-1})},
\end{split}
\end{equation*}
where in the second, third and fourth equality, we have applied the following results:
\begin{equation*}
\begin{split}
\sin u+\sin(z\sqrt{q^2-1})&=2\sin\biggl(\frac{u+z\sqrt{q^2-1}}{2}\biggr)\cos\biggl(\frac{u-z\sqrt{q^2-1}}{2}\biggr),\\
\cos u-\cos(z\sqrt{q^2-1})&=-2\sin\biggl(\frac{u+z\sqrt{q^2-1}}{2}\biggr)\sin\biggl(\frac{u-z\sqrt{q^2-1}}{2}\biggr),\\
\cos(u-z\sqrt{q^2-1})&=2\cos^2\biggl(\frac{u-z\sqrt{q^2-1}}{2}\biggr)-1=1-2\sin^2\biggl(\frac{u-z\sqrt{q^2-1}}{2}\biggr),\\
\cos(u-z\sqrt{q^2-1})&=\frac{\cos(z\sqrt{q^2-1})+\sqrt{q^2-1}\sin(z\sqrt{q^2-1})}{q},
\end{split}
\end{equation*}
respectively. Thus, we have
\begin{equation}\label{Rqz}
R(q,z)=\biggl(\frac{{q}-1}{{q}+1}\biggr)\biggl(\frac{{q}
+\cos(z\sqrt{q^2-1})+\sqrt{q^2-1}\sin(z\sqrt{q^2-1})}
{{q}-\cos(z\sqrt{q^2-1})-\sqrt{q^2-1}\sin(z\sqrt{q^2-1})}\biggr).
\end{equation}
Alternatively, by imitating the proof of Theorem~\ref{mthm:01}, it can be shown that $R=R(q,z)$ satisfies the following linear partial differential equation
$$
(1-zq^2)\frac{\partial R}{\partial z}+q(q^2-1)\frac{\partial R}{\partial q}=2qR,
$$
with initial condition $R(q,0)=R_1(q)=1$ and whose solution is \eqref{Rqz}.

Combining~\eqref{Rqz} and Corollary~\ref{pqxyz-vqxyz}, we get the following result.
\begin{theorem}
We have
$$P(q,x,y,z)=e^{xz(y-1)}R(\sqrt{q},z)^{\frac{x}{2\sqrt{q}}}.$$
\end{theorem}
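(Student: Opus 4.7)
The plan is to observe that the bracketed factor in the explicit formula for $P(q,x,y,z)$ given in Corollary~\ref{pqxyz-vqxyz} is exactly $R(\sqrt{q},z)$, so the identity reduces to a routine substitution in \eqref{Rqz}.

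First I would specialise \eqref{Rqz} by replacing $q$ with $\sqrt{q}$. Under this substitution, $q^2$ becomes $q$ and $\sqrt{q^2-1}$ becomes $\sqrt{q-1}$, so
\begin{equation*}
R(\sqrt{q},z)=\biggl(\frac{\sqrt{q}-1}{\sqrt{q}+1}\biggr)\biggl(\frac{\sqrt{q}+\cos(z\sqrt{q-1})+\sqrt{q-1}\sin(z\sqrt{q-1})}{\sqrt{q}-\cos(z\sqrt{q-1})-\sqrt{q-1}\sin(z\sqrt{q-1})}\biggr).
\end{equation*}
This is precisely the expression inside the square brackets in the formula for $P(q,x,y,z)$ provided by Corollary~\ref{pqxyz-vqxyz}.

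Next, I would raise the identified expression to the power $\tfrac{x}{2\sqrt{q}}$ and multiply by the prefactor $e^{xz(y-1)}$; invoking Corollary~\ref{pqxyz-vqxyz} then yields
\begin{equation*}
P(q,x,y,z)=e^{xz(y-1)}\,R(\sqrt{q},z)^{\frac{x}{2\sqrt{q}}},
\end{equation*}
as claimed. There is no real obstacle here: once the matching of the two closed forms is recognised, the proof is essentially a one-line verification. The only mild subtlety is branch/sign consistency of $\sqrt{q}$ and $\sqrt{q-1}$, but this is handled uniformly since both sides are interpreted as formal power series in $z$ whose coefficients are polynomials in $x,y$ and in the even powers of $\sqrt{q}$ and $\sqrt{q-1}$, matching term by term with the combinatorial definition.
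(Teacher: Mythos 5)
Your proposal is correct and follows exactly the paper's route: the paper proves this theorem simply by ``combining \eqref{Rqz} and Corollary~\ref{pqxyz-vqxyz}'', which is precisely your substitution $q\mapsto\sqrt{q}$ in \eqref{Rqz} to recognise the bracketed factor in the closed form of $P(q,x,y,z)$. Your added remark on branch consistency of $\sqrt{q}$ and $\sqrt{q-1}$ is a harmless refinement the paper leaves implicit.
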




\end{document}